\numberwithin{equation}{section}
\newtheorem{theo}{Theorem}[section]
\newtheorem{lem}[theo]{Lemma}
\begin{document}

\newcommand{\E}{\mathbb{EX}}
\newcommand{\EX}{{\rm EX}}

\title{The Tur\'an number of path-star forests}
\author{Xiaona Fang$^{a,b}$, Yaojun Chen$^a$, Lihua You$^{b,}$\thanks{Corresponding author: ylhua@scnu.edu.cn}\\
{\small $^a$School of Mathematics, Nanjing University,}\\ {\small Nanjing, 210093, P. R. China}\\
{\small $^b$School of Mathematical Sciences, South China Normal University,}\\ {\small Guangzhou, 510631, P. R. China}
}
\date{}
\maketitle

\begin{abstract}
The Tur\'an number of a graph $H$, denoted by $ex(n,H)$, is the maximum number of edges in any graph on $n$ vertices containing no $H$ as a subgraph. A linear (star) forest is a forest consisting of paths (stars). A path-star forest $F$ is a forest consisting of paths and stars. In this paper, we determine $ex(n,F)$ for sufficiently large $n$ and characterize the corresponding extremal graphs, and our result generalizes previous known results on the Tur\'an numbers of linear forests and star forests.

\vskip 2mm
\noindent{\bf Keywords}: Tur\'an number; path-star forest; extremal graph.
\vskip 2mm
\noindent{\bf AMS classification: }05C05, 05C35
\end{abstract}

\section{Introduction}\label{sec1}
\hspace{1.5em}In this paper, we only consider simple graphs. Let $K_n$, $P_n$, $S_{n-1}$ be a complete graph, a path and a star on $n$ vertices, respectively. For a graph $G$, we use $V(G)$, $|G|$, $E(G)$, $e(G)$ to denote the vertex set, the number of vertices, the edge set and the number of edges of $G$, respectively. For $S,S'\subseteq V(G)$ and $S\cap S'=\emptyset$, denote by $e(S,S')$ the number of edges between $S$ and $S'$. For a vertex $v \in V(G)$, let $N_G(v)$ denote the set of vertices in $G$ which are adjacent to $v$ and  $d_G(v) = |N_G(v)|$.

Let $G$ and $H$ be two disjoint graphs, denote by $G \cup H$ the disjoint union of $G$ and $H$ and by $k  G$ the disjoint union of $k$ copies of a graph $G$. Denote by $G\vee H$ the graph obtained from $G \cup H$ by adding edges between all vertices of $G$ and all vertices of $H$. We use $\overline{G}$ to denote the complement of the graph $G$. For any set $S \subseteq V (G)$, let $G[S]$ denote the subgraph of $G$ induced by $S$, and let $|S|$ denote the cardinality
of $S$. For a graph $G$ and its subgraph $H$, let $G - H$ denote the subgraph induced by
$V (G)\setminus V (H)$. If $H$ consists of a single vertex $x$, then we simply write $G - x$.

Let $M_k=\frac{k}{2}K_2$ or $\frac{k-1}{2}K_2\cup K_1$ depending on whether $k$ is even or odd, $G_1(n,k)=K_k \vee \overline{K}_{n-k}$, $G_1^+(n,k)=K_k \vee (K_2 \cup \overline{K}_{n-k-2})$ and $G_2(n,k,\ell)=K_k \vee (d K_{\ell-1} \cup K_r)$, where $n=k+d(\ell-1)+r$, $0 \leq  r <\ell-1$ (see Figure \ref{fig1}).

\begin{figure}
	\centering
	\begin{tikzpicture}
		\node[anchor=south west,inner sep=0] (image) at (0,0) {\includegraphics[width=1\textwidth]{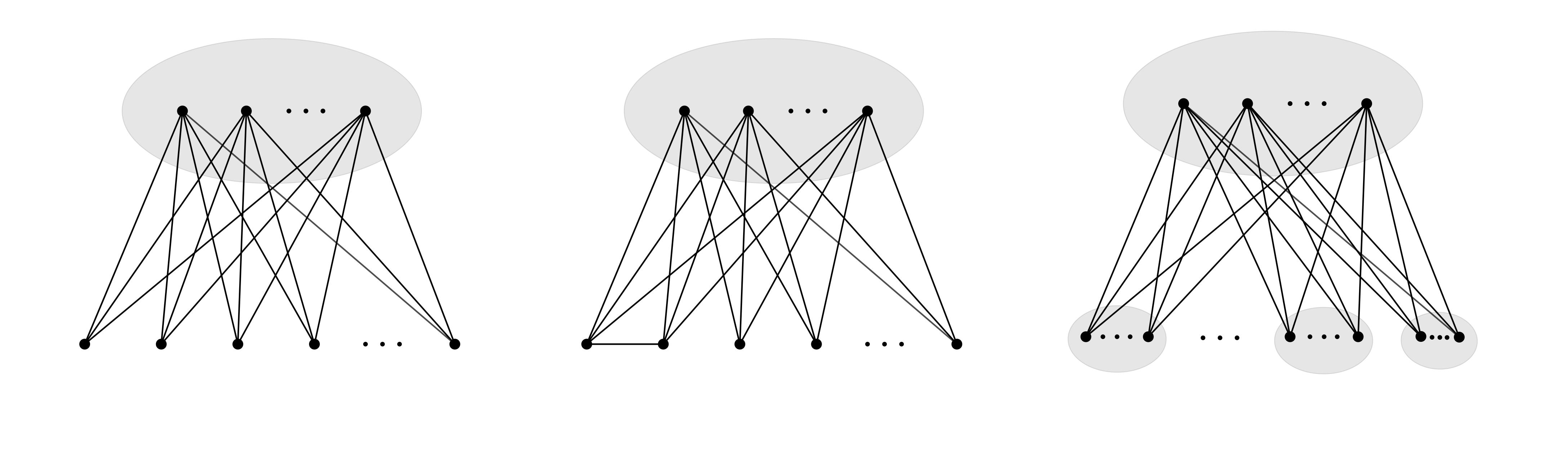}};
		\begin{scope}[
			x={(image.south east)},
			y={(image.north west)}
			]
			\node [black, font=\bfseries] at (0.175,0.84) {$K_k$};
			\node [black, font=\bfseries] at (0.5,0.84) {$K_k$};
			\node [black, font=\bfseries] at (0.82,0.85) {$K_k$};
			\node [black, font=\bfseries] at (0.72,0.19) {$K_{\ell-1}$};
			\node [black, font=\bfseries] at (0.85,0.19) {$K_{\ell-1}$};
			\node [black, font=\bfseries] at (0.92,0.19) {$K_r$};
			\node [black, font=\bfseries] at (0.175,0.05) {$G_1(n,k)$};
			\node [black, font=\bfseries] at (0.5,0.05) {$G_1^+(n,k)$};
			\node [black, font=\bfseries] at (0.81,0.05) {$G_2(n,k,\ell)$};
		\end{scope}
	\end{tikzpicture}
	\caption{Three graphs.}
	\label{fig1}
\end{figure}

Let $n \geq m\geq \ell\geq 2$ be three positive integers and $n=(m-1)+d(\ell-1)+r$ with $d\geq 0$ and $0 \leq  r <\ell-1$. Define 
$$[n,m,\ell]=\binom{m-1}{2}+d\binom{\ell-1}{2}+\binom{r}{2}.$$

For convenience, let $[p]=\{1,2,\dots,p\}$, where $p$ is a positive integer.

A graph is $H$-free if it does not contain a copy of $H$ as a subgraph. The Tur\'an number of a graph $H$, denoted by $ex(n, H)$, is the maximum number of edges in any $H$-free graph on $n$ vertices. Let $\mathbb{EX}(n, H)$ denote the set of $H$-free graphs on $n$ vertices with $ex(n, H)$ edges. If $\mathbb{EX}(n, H)$ contains only one
graph, we may simply use \EX$(n, H)$ instead. A graph in $\mathbb{EX}(n, H)$ is called an extremal graph for $H$. 

The problem of determining Tur\'an number of graphs traces its history back to 1907, when Mantel \cite{mantel} proved $ex(n, C_3)=\lfloor \frac{n^2}{4} \rfloor$.  In 1941, Tur\'an \cite{turan1,turan2} determined $ex(n,K_r)$ and the corresponding extremal graphs, which extends Mantel’s theorem. The study of Tur\'an numbers of forests began with the famous result of Erd\H os and Gallai \cite{erdos1959}, which determined the Tur\'an numbers of matchings and paths. Faudree and Schelp \cite{faudree1975} further characterized  the
extremal graphs for $P_{\ell}$ and obtained the following.

\begin{theo}{\rm (\cite{faudree1975})}\label{p1}
	Let $n=d(\ell-1)+r$, where $d \geq 1$ and $0\leq  r<\ell-1$. Then
	$ex(n, P_{\ell}) = [n,\ell,\ell]$.
	Furthermore,
	$$\mathbb{EX}(n,P_{\ell})=\left\{ d K_{\ell-1} \cup K_r, (d-s-1) K_{\ell-1}\cup 
\left(K_{\frac{\ell-2}{2}} \vee
\overline{K}_{\frac{\ell}{2}+s(\ell-1)+r}
\right), 0\leq s \leq d-1 \right\}$$ 
 if $\ell$ is even and $r = \frac{\ell}{2}$ or $\frac{\ell-2}{2}$, and 
	\EX$(n, P_{\ell}) = d K_{\ell-1} \cup K_r$ otherwise.
\end{theo}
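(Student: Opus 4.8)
The plan is to establish the edge bound $e(G)\le [n,\ell,\ell]$ for every $P_\ell$-free graph $G$ on $n$ vertices by induction on $n$, and then to read off the extremal graphs from a careful equality analysis. Since $n=d(\ell-1)+r$ gives $[n,\ell,\ell]=d\binom{\ell-1}{2}+\binom{r}{2}$, which is exactly the number of edges of $dK_{\ell-1}\cup K_r$, the whole argument is organized around showing that no $P_\ell$-free graph can beat this clique packing, and that the only ways to tie it are the ones listed. A useful reformulation is that $[n,\ell,\ell]$ is the maximum of $\sum_i\binom{a_i}{2}$ over all partitions $n=\sum_i a_i$ with each $a_i\le\ell-1$: a clique $K_\ell$ already contains $P_\ell$, so a ``disjoint-clique'' $P_\ell$-free graph uses only cliques of order at most $\ell-1$, and convexity of $\binom{x}{2}$ forces the optimum to fill as many full $K_{\ell-1}$'s as possible.

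First I would dispose of the disconnected case. If $G$ has components of orders $n_1,\dots,n_t$, each component is $P_\ell$-free, so induction gives $e(G)=\sum_i e(G_i)\le\sum_i[n_i,\ell,\ell]$. The combinatorial core here is the superadditivity inequality $[a,\ell,\ell]+[b,\ell,\ell]\le[a+b,\ell,\ell]$, which is immediate from the packing description: concatenating an optimal clique packing of $a$ vertices and one of $b$ vertices is a legal packing of $a+b$ vertices, so the maximum over packings of the union is at least the sum. Iterating yields $e(G)\le[n,\ell,\ell]$, and tracking when the superadditivity is tight shows that equality forces all but one of the ``clique blocks'' to be full $K_{\ell-1}$'s and the leftover to pack as $K_r$, producing the extremal graph $dK_{\ell-1}\cup K_r$.

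The genuine work is the connected case, $G$ connected with $n\ge\ell$. Here I would take a longest path $P=v_1v_2\cdots v_p$, so $p\le\ell-1$ by $P_\ell$-freeness, and use that every neighbour of $v_1$ and of $v_p$ lies on $P$. Pósa-type rotation–extension then severely restricts how the endpoints attach to $P$: the impossibility of lengthening $P$ (or of finding a long cycle through $P$ that could be rerouted into a longer path, using connectivity to the outside) pins the local structure. Passing from the $2$-connected blocks, where a Kopylov-type bound applies, to a general connected $G$ via the block–cut tree, the only connected graphs attaining the bound turn out to be $K_{\ell-1}$ itself and, when $\ell$ is even, the join $K_{(\ell-2)/2}\vee\overline{K}_{n-(\ell-2)/2}$; the latter is $P_\ell$-free because any path alternates independent vertices with the $\tfrac{\ell-2}{2}$ hub vertices, giving a longest path on exactly $2\cdot\tfrac{\ell-2}{2}+1=\ell-1$ vertices.

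The delicate step, and the one I expect to be the main obstacle, is the extremal characterization rather than the value. One computes $e\bigl(K_{(\ell-2)/2}\vee\overline{K}_{M-(\ell-2)/2}\bigr)=\binom{(\ell-2)/2}{2}+\tfrac{\ell-2}{2}\bigl(M-\tfrac{\ell-2}{2}\bigr)$ for the join component on $M=(s+1)(\ell-1)+r$ vertices and compares it with the $(s+1)$ clique blocks $(s+1)\binom{\ell-1}{2}+\binom{r}{2}$ it would replace. Equating the two reduces to the quadratic $\ell^2-2\ell+4r^2+4r-4r\ell=0$, whose roots are exactly $r=\tfrac{\ell}{2}$ and $r=\tfrac{\ell-2}{2}$; thus the counts coincide precisely when $\ell$ is even and $r\in\{\tfrac{\ell-2}{2},\tfrac{\ell}{2}\}$, which is the exceptional regime in the statement. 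In that regime, for each $0\le s\le d-1$ one may swap $s$ cliques together with the $K_r$ for a single join component $K_{(\ell-2)/2}\vee\overline{K}_{\ell/2+s(\ell-1)+r}$ without changing the edge count, yielding the one-parameter family; outside this regime the join is strictly worse, so $dK_{\ell-1}\cup K_r$ is the unique extremal graph. The bulk of the technical effort lies in confirming that these are the \emph{only} equality cases, i.e.\ in closing off every other near-extremal connected configuration and verifying the exact edge counts and $P_\ell$-freeness of the substitute components.
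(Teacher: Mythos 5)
A preliminary remark: the paper never proves this statement at all --- it is quoted as a known result of Faudree and Schelp \cite{faudree1975} and used as a black box --- so your proposal cannot be compared against an internal argument and must stand on its own. Much of it does stand: the reformulation of $[n,\ell,\ell]$ as the optimal clique packing $\max\sum_i\binom{a_i}{2}$ with parts $a_i\le\ell-1$, the superadditivity $[a,\ell,\ell]+[b,\ell,\ell]\le[a+b,\ell,\ell]$, the verification that $K_{(\ell-2)/2}\vee\overline{K}_{m}$ is $P_\ell$-free with longest path on exactly $\ell-1$ vertices, and, most importantly, the decisive arithmetic are all correct: comparing the join component on $M=(s+1)(\ell-1)+r$ vertices with $(s+1)\binom{\ell-1}{2}+\binom{r}{2}$ does reduce to $\bigl(r-\tfrac{\ell-2}{2}\bigr)\bigl(r-\tfrac{\ell}{2}\bigr)=0$, which is equivalent to your quadratic and is exactly what singles out the exceptional regime $\ell$ even, $r\in\{\tfrac{\ell-2}{2},\tfrac{\ell}{2}\}$. (A small slip: the join component replaces $s+1$ copies of $K_{\ell-1}$ together with $K_r$, not $s$ of them.)

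The genuine gap is the connected case, which you yourself call ``the genuine work'' and then do not carry out. For a connected $P_\ell$-free graph $G$ on $n\ge\ell$ vertices you need two things: (i) $e(G)\le[n,\ell,\ell]$, which is \emph{strictly stronger} than the Erd\H{o}s--Gallai bound $\tfrac{(\ell-2)n}{2}$ whenever $(\ell-1)\nmid n$ (indeed $\tfrac{(\ell-2)n}{2}-[n,\ell,\ell]=\tfrac{r(\ell-1-r)}{2}$); and (ii) equality forces $\ell$ even, $G=K_{(\ell-2)/2}\vee\overline{K}_{n-(\ell-2)/2}$, and $n\equiv\tfrac{\ell-2}{2}$ or $\tfrac{\ell}{2}\pmod{\ell-1}$. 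Invoking ``P\'osa-type rotation--extension,'' ``a Kopylov-type bound'' on blocks, and the block--cut tree names the right tools but proves nothing: Kopylov's theorem concerns cycles in $2$-connected graphs, and assembling block-level estimates across cut vertices into the exact global bound $[n,\ell,\ell]$ --- with every equality case tracked --- is precisely the content of Faudree--Schelp's proof (or of the later Balister--Gy\H{o}ri--Lehel--Schelp theorem on connected graphs without long paths, which you would have to quote in its exact form and then still perform the equality bookkeeping). Moreover, without (i) and (ii) your disconnected reduction cannot ``track tightness of superadditivity'' at all, since the components of a candidate extremal graph are \emph{connected} graphs attaining $[n_i,\ell,\ell]$; and you still owe the (easy, but absent) argument that at most one component can be of join type, e.g.\ via $\binom{r_1}{2}+\binom{r_2}{2}<\binom{r_1+r_2}{2}$ for $r_1,r_2\ge1$. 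In short: correct skeleton and verified arithmetic, but the theorem's core --- the connected-case bound and its equality characterization --- is asserted rather than proved.
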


A path-star forest is a forest whose connected components are paths and stars. Let $F=\mathop{\cup}\limits _{i=1}^{p} P_{\ell _i} \mathop{\cup}\limits _{j=1}^{q} S_{a _j}$ be a path-star forest. If $q=0$, then $F$ is called a linear forest, and if $p=0$, then $F$ is called a star forest.  For convenience, we set
\begin{equation*}
	\delta_F=\sum\limits_{i=1}^{p} \left\lfloor\frac{\ell_i}{2}\right\rfloor.
\end{equation*}

For linear forests, Lidick\'y et al. \cite{lidicky2013} determined the value of $ex(n, \cup_{i=1}^p P_{\ell _i})$ for sufficiently large $n$.

\begin{theo}{\rm (\cite{lidicky2013})}\label{pp}
	Let $F=\cup_{i=1}^p P_{\ell _i}$. If at least
	one $\ell_i$ is not $3$, then for sufficiently large $n$,
	$$ex(n,F)=(\delta_F-1)\left(n-\frac{\delta_F}{2}\right)+c,$$
	where $c = 1$ if all $\ell_i$ are odd and $c = 0$ otherwise. Moreover, \EX$(n,F)=G_1^+(n, \delta_F-1) $ if all $\ell_i$ are odd, and \EX$(n,F)=G_1(n, \delta_F-1)$ otherwise.
\end{theo}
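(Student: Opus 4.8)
\noindent\emph{Proof strategy.}\quad The plan is to organise the whole argument around a single invariant: for $m$ large, the least number $t$ of vertices with $F\subseteq K_t\vee\overline{K}_m$ equals $\delta_F$. Indeed each $P_{\ell_i}$ must be embedded as an alternating path whose vertices outside the independent side form a vertex cover of $P_{\ell_i}$, and the minimum vertex cover of $P_{\ell_i}$ has size $\lfloor \ell_i/2\rfloor$. First I would deduce the lower bound. The graph $G_1(n,\delta_F-1)$ has matching number $\delta_F-1<\delta_F$, so it is $F$-free and already gives $ex(n,F)\ge(\delta_F-1)(n-\delta_F/2)$. For the extra edge I would track how an edge placed inside the independent side shortens the cover: one such edge lowers the needed cover by exactly $1$ when it lies on a path $P_{\ell_i}$ with $\ell_i$ even, whereas on an odd path a single edge saves nothing and one needs a $P_3$ or two disjoint edges to save one. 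Hence $G_1^+(n,\delta_F-1)$ is $F$-free precisely when every $\ell_i$ is odd, which produces the term $c$ and selects $G_1$ or $G_1^+$ accordingly.

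For the upper bound let $G$ be $F$-free with $n$ large and $e(G)=ex(n,F)$. First I would extract the set $B$ of near-universal vertices, those of degree at least $n-\sqrt{n}$. The key embedding lemma is that $|B|\le\delta_F-1$: any $\delta_F$ near-universal vertices have a common neighbourhood of size nearly $n$, so using them as the $\delta_F$ apexes in the alternating embedding above and drawing connectors from their common neighbourhood produces a copy of $F$, a contradiction. This is the step where $n$ being large is essential, since it guarantees enough fresh connector vertices and common neighbours.

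Write $R=V(G)\setminus B$. I would first bound the matching number of $G[R]$: a matching of size $\mu$ in $R$ can be spliced into the paths of $F$ using $B$-vertices and fresh low-degree vertices as connectors, building $F$ once $\mu$ exceeds a constant depending only on $F$; thus $F$-freeness forces $\nu(G[R])=O(1)$, and a bounded set $C\subseteq R$ covers all edges of $G[R]$. Next I would show that no vertex of $R$ has large degree inside $R$, for a single such vertex, used as an extra apex whose own neighbourhood supplies its connectors while $B$ supplies the others, upgrades $B$ to $\delta_F$ effective apexes and embeds $F$. Together these give $e(G[R])=O(1)$, hence $e(G)\le\binom{|B|}{2}+|B|(n-|B|)+O(1)$; since $\binom{b}{2}+b(n-b)$ is increasing in $b$ with increments of order $n$, any $G$ close to the maximum must satisfy $|B|=\delta_F-1$ with $B$ complete to $R$.

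The final step is the exact size of $e(G[R])$. With all $\delta_F-1$ apexes available, forming $F$ requires saving exactly one further apex from inside $G[R]$ through the cover mechanism of the first paragraph, so $F$-freeness forbids $G[R]$ from containing one edge when some $\ell_i$ is even, and from containing a $P_3$ or two disjoint edges when all $\ell_i$ are odd; this yields $e(G[R])\le c$, and tracing the equality case identifies $\EX(n,F)$ as $G_1(n,\delta_F-1)$ or $G_1^+(n,\delta_F-1)$. The hard part will be carrying out the remainder analysis with exact constants rather than $O(1)$: I must rule out intermediate-degree, star-like dense pieces of $R$ that have small matching yet many edges, route all $p$ paths simultaneously out of the shared apex pool, and keep the parity bookkeeping tight enough that the additive constant is exactly $c$ and not off by one. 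Finally, the excluded case $F=pP_3$, which is a star forest for which sparse graphs near $\frac{n}{2}K_2$ outperform the join construction, must be set aside and treated by the star-forest results.
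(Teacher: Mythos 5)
Your proposal cannot be compared with a proof in the paper, because the paper does not prove Theorem \ref{pp} at all: it is quoted from Lidick\'y--Liu--Palmer \cite{lidicky2013}. Judged on its own terms, your upper-bound argument has a genuine gap at the step where you claim that ``$F$-freeness forces $\nu(G[R])=O(1)$'' (and subsequently $e(G[R])=O(1)$). The splicing of matching edges of $G[R]$ into the paths of $F$ requires apex vertices adjacent to (nearly) everything, so the argument silently assumes a \emph{lower} bound on $|B|$; but at that point you only know $|B|\le \delta_F-1$, and nothing you prove excludes $|B|=0$, in which case a matching in $R$ contains no $P_4$ and no contradiction arises. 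The gap is not cosmetic. Your argument never uses $p\ge 2$, yet the statement is false for $p=1$ (the quotation in this paper omits the hypothesis, present in \cite{lidicky2013}, that the forest has at least two components): for $F=P_5$ the graph $dK_4\cup K_r$ is $P_5$-free, has no near-universal vertices, has a huge matching in $R$, and has $[n,5,5]\approx \tfrac32 n$ edges by Theorem \ref{p1}, far exceeding the claimed value $(\delta_F-1)(n-\delta_F/2)+1=n$. A proof that runs verbatim for $p=1$ and reaches the formula must therefore contain an error, and this is it: union-of-cliques configurations (no apexes, large matching, many edges) are exactly what the embedding argument cannot see. The missing core of the theorem is to show, using the edge count $e(G)\ge(\delta_F-1)(n-\delta_F/2)$ \emph{and} $p\ge 2$, that $G$ actually contains $\delta_F-1$ vertices of near-full degree; in the literature this is done by induction on the number of components --- delete a copy of a sub-forest, compare $e(G)-e(G-\cdot)$ with the Tur\'an number of the smaller forest to manufacture a high-degree vertex --- which is the same scheme as Part I of Section \ref{PS} of this paper.

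There is also a quantitative defect that persists even if one grants $|B|=\delta_F-1$. With the threshold $n-\sqrt{n}$, the set of vertices not joined to all of $B$ can have size of order $\sqrt{n}$, so your splicing and extra-apex arguments bound $\nu(G[R])$ and the degrees inside $R$ only by $O(\sqrt{n})$, hence $e(G[R])=O(n)$ --- the same order as the slack in the main term, and useless for pinning down the additive constant $c\in\{0,1\}$, for forcing $B$ to be complete to $R$, or for identifying $G_1(n,\delta_F-1)$ and $G_1^+(n,\delta_F-1)$ as the unique extremal graphs. You flag this as ``the hard part'' to be done later, but it is not bookkeeping: together with the missing lower bound on $|B|$, it is the substance of the exact result. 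What is sound in the proposal is the lower-bound half: $G_1(n,\delta_F-1)$ is $F$-free because its matching number $\delta_F-1$ is less than $\nu(F)=\delta_F$, the cover-minus-one-edge computation correctly shows $G_1^+(n,\delta_F-1)$ is $F$-free precisely when all $\ell_i$ are odd, and the lemma $|B|\le\delta_F-1$ via common neighbourhoods is correct.
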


Zhu and Chen \cite{zhu2022} improved the lower bound of $n$ in Theorem \ref{pp} for $F=\cup_{i=1}^p P_{\ell _i}$ with each $\ell_i\not=3$.
\begin{theo}{\rm (\cite{zhu2022})}\label{kPnZ}
	Let $p\geq 2$, $F=\cup_{i=1}^p P_{\ell _i}$ with $\ell_i \neq 3$ for $i\in [p]$ and $s=\sum_{i=1}^p \ell_i$.
	If $n\geq \frac{5(s-1)(s-2)^2}{4(\delta_F-1)}+\delta_F-1$, then 
	$$ex(n,F)=(\delta_F-1)\left(n-\frac{\delta_F}{2}\right)+c,$$
	where $c = 1$ if all $\ell_i$ are odd and $c = 0$ otherwise. Moreover,  \EX$(n,F)=G_1^+(n, \delta_F-1) $ if all $\ell_i$ are odd, and \EX$(n,F)=G_1(n, \delta_F-1)$ otherwise.
\end{theo}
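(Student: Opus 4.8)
The plan is to prove the lower and upper bounds separately, following the strategy behind Theorem~\ref{pp} but making the dependence on $n$ quantitative so as to reach the explicit threshold $n_0:=\frac{5(s-1)(s-2)^2}{4(\delta_F-1)}+\delta_F-1$.

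For the lower bound I would exhibit $G_1(n,\delta_F-1)$ and $G_1^+(n,\delta_F-1)$ and verify that they are $F$-free. The key observation is that in $K_{\delta_F-1}\vee \overline{K}_{n-\delta_F+1}$ the part $\overline{K}_{n-\delta_F+1}$ is independent, so on any copy of a path $P_{\ell_i}$ no two consecutive vertices lie in it; hence $P_{\ell_i}$ meets the clique $K_{\delta_F-1}$ in at least $\lfloor \ell_i/2\rfloor$ vertices. Summing over the $p$ vertex-disjoint paths, a copy of $F$ would need at least $\sum_i\lfloor\ell_i/2\rfloor=\delta_F>\delta_F-1$ clique vertices, a contradiction, so $G_1(n,\delta_F-1)$ is $F$-free with exactly $(\delta_F-1)(n-\delta_F/2)$ edges. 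For $G_1^+$ I would redo this count allowing the single extra edge inside the sparse side: a path $P_{\ell}$ may use that edge to have one pair of consecutive vertices off the clique, and a short computation shows this lowers its clique-demand from $\lfloor\ell/2\rfloor$ to $\lfloor\ell/2\rfloor-1$ exactly when $\ell$ is even and not when $\ell$ is odd. Consequently $G_1^+(n,\delta_F-1)$ remains $F$-free precisely when every $\ell_i$ is odd, which pins down $c$ and matches the two claimed extremal graphs.

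For the upper bound I would show that every $F$-free $G$ on $n\ge n_0$ vertices has at most $(\delta_F-1)(n-\delta_F/2)+c$ edges. First I would reduce to a minimum-degree condition: since the claimed formula gives $ex(n,F)-ex(n-1,F)=\delta_F-1$, any vertex of degree at most $\delta_F-2$ may be deleted without dropping below the target, so (inducting on $n$) I may assume $\delta(G)\ge\delta_F-1$. Next, the target equals $ex(n,\delta_F K_2)+c$ with $c\le 1$ by the Erd\H os--Gallai matching theorem, whose ``$K_{\delta_F-1}\vee\overline{K}$'' extremal regime is active because $n\ge n_0$; thus any $F$-free $G$ with $e(G)>\,$target satisfies $e(G)>ex(n,\delta_F K_2)$ and hence contains a matching $M$ of size $\delta_F$. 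The heart of the argument is an embedding lemma: if $G$ contains $\delta_F$ vertices with a common neighbourhood of size at least $s-\delta_F$ (equivalently a complete bipartite $K_{\delta_F,\,s-\delta_F}$), then $G\supseteq F$, because each $P_{\ell_i}$ is a Hamilton path of the balanced complete bipartite graph $K_{\lfloor\ell_i/2\rfloor,\,\lceil\ell_i/2\rceil}$, and the $\delta_F$ ``centre'' vertices together with the $s-\delta_F=\sum_i\lceil\ell_i/2\rceil$ ``leaf'' vertices can be allocated disjointly among the $p$ paths.

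The main obstacle is to convert the matching $M$ and the edge surplus into the structure demanded by the embedding lemma, and this is exactly where $n\ge n_0$ is used. I would let $B$ be the set of ``dominating'' vertices (those of degree near $n$) and show that $F$-freeness forces $|B|\le\delta_F-1$, since $\delta_F$ such vertices share a common neighbourhood of size at least $s-\delta_F$ and the embedding lemma would then produce $F$. It remains to bound the edges outside $B$, i.e. to show $e(G-B)\le c$; this is the delicate counting step. Using $M$, the minimum-degree bound, and Erd\H os--Gallai estimates on the paths that a surplus edge would create, one argues that any edge of $G-B$ beyond the single one permitted when all $\ell_i$ are odd can be used to grow $\delta_F-1$ matching edges through the reservoir $V(G)\setminus B$ into a full copy of $F$. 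The hypothesis $n\ge n_0$, i.e. $(\delta_F-1)(n-\delta_F+1)\ge \tfrac54(s-1)(s-2)^2$, guarantees the reservoir is large enough for all these extensions to be carried out disjointly, the factor $(s-1)(s-2)^2$ reflecting the Erd\H os--Gallai cost of building up to $s$ path-vertices while avoiding the $O(s)$ previously used vertices at each of the $O(s)$ linking steps. Finally, tracing the equality case forces $B$ to be a clique of size $\delta_F-1$ joined completely to the rest, with $G-B$ edgeless when some $\ell_i$ is even and a single $K_2$ when all $\ell_i$ are odd, yielding precisely $G_1(n,\delta_F-1)$ or $G_1^+(n,\delta_F-1)$.
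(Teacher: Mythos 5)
First, a point of orientation: the paper does not prove Theorem~\ref{kPnZ} at all --- it is quoted from \cite{zhu2022} and used as a black box --- so your argument has to stand entirely on its own; there is no in-paper proof to compare it to. Your lower-bound half does stand: the parity count showing each $P_{\ell_i}$ must meet the clique of $G_1(n,\delta_F-1)$ in at least $\lfloor \ell_i/2\rfloor$ vertices, and the refinement showing the extra edge of $G_1^+(n,\delta_F-1)$ reduces the clique-demand of a path by one exactly when that path is even, are both correct and pin down $c$ and the two extremal candidates.

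The upper bound, however, has a genuine gap at exactly the step you call ``the delicate counting step.'' Your plan is: (i) $|B|\le \delta_F-1$ for the set $B$ of near-dominating vertices (fine, via the embedding lemma $K_{\delta_F,\,s-\delta_F}\supseteq F$), and (ii) $e(G-B)\le c$, derived from ``$M$, the minimum-degree bound, and Erd\H{o}s--Gallai estimates.'' Claim (ii) simply does not follow from those ingredients. Take $F=P_{10}\cup P_4$ (so $\delta_F=7$, $s=14$) and $G=dK_9$ with $n=9d$: this graph is $F$-free (no component is large enough to hold a $P_{10}$), has minimum degree $8\ge \delta_F-1$, contains a matching of size far exceeding $\delta_F$, and yet has $B=\emptyset$ and $e(G-B)=4n$. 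So ``$F$-free $+$ min degree $+$ matching'' cannot yield (ii); any correct proof must use the edge surplus $e(G)>(\delta_F-1)(n-\delta_F/2)+c$ in an essential way to manufacture vertices of degree $n-O(1)$, and your sketch contains no mechanism for this conversion. This is the actual content of the theorem: unlike the path-star setting of the paper, where star centres only need constant degree (so a surplus of $\Omega(n)$ edges on boundedly many vertices immediately yields usable centres), every vertex of a linear forest has degree at most $2$, and forcing $\delta_F-1$ near-dominating vertices out of an edge count is the hard part. Your proposed growth step (``grow matching edges through the reservoir into a copy of $F$'') is exactly what fails in $dK_9$. Two further problems: your induction on $n$ for the minimum-degree reduction has no base case at the threshold and, run as a deletion process, does not preserve the largeness of $n$ that the rest of the argument needs; and nothing in your sketch ever uses the specific bound $n\ge \frac{5(s-1)(s-2)^2}{4(\delta_F-1)}+\delta_F-1$ --- all your largeness assumptions are qualitative, so even if the counting step were repaired you would at best recover the ``sufficiently large $n$'' statement (Theorem~\ref{pp} restricted to $\ell_i\ne 3$), not the quantitative theorem of \cite{zhu2022} that is actually being claimed.
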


Yuan and Zhang \cite{yuan2017,yuan2021} completely determined the value of $ex(n, pP_3)$ and characterized
all the extremal graphs for all $n$. Furthermore, they determined the Tur\'an numbers of the linear forests containing at most
one odd path for all $n$. 

\begin{theo}{\rm (\cite{yuan2017})}\label{p3}
	For $n> 5p-1$, $ex(n,p P_3)= (p-1)(n-\frac{p}{2}) + \lfloor\frac{n-p+1}{2}\rfloor $. Moreover, \EX $ (n, p P_3)=K_{p-1}\vee M_{n-p+1}$.
\end{theo}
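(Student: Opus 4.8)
The plan is to determine the value and the unique extremal graph simultaneously, by induction on $p$.

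\textbf{Lower bound.} In $G^{*}:=K_{p-1}\vee M_{n-p+1}$ the matching part $M_{n-p+1}$ has maximum degree $1$, so it contains no $P_3$; hence every copy of $P_3$ in $G^{*}$ uses at least one of the $p-1$ vertices of $K_{p-1}$. Consequently $G^{*}$ has at most $p-1$ pairwise vertex-disjoint copies of $P_3$, so $G^{*}$ is $pP_3$-free. Counting edges gives $e(G^{*})=\binom{p-1}{2}+(p-1)(n-p+1)+\lfloor\frac{n-p+1}{2}\rfloor$, and since $\binom{p-1}{2}+(p-1)(n-p+1)=(p-1)(n-\frac p2)$, this equals the claimed value. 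Thus $ex(n,pP_3)\ge (p-1)(n-\frac p2)+\lfloor\frac{n-p+1}{2}\rfloor$.

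\textbf{Upper bound and uniqueness (induction on $p$).} The base case $p=1$ is immediate: a $P_3$-free graph has maximum degree at most $1$, i.e.\ is a matching, so $ex(n,P_3)=\lfloor n/2\rfloor$ with unique extremal graph $M_n=K_0\vee M_n$. For the inductive step, let $G$ be $pP_3$-free on $n$ vertices with $e(G)=ex(n,pP_3)$, and let $v$ be a vertex of maximum degree $\Delta$. The engine is: if $\Delta\ge 3p-1$, then $G-v$ is $(p-1)P_3$-free, since a family of $p-1$ disjoint $P_3$'s in $G-v$ occupies only $3(p-1)$ vertices, leaving at least $\Delta-3(p-1)\ge 2$ neighbours of $v$ free to form a $p$-th $P_3$ centred at $v$, contradicting $pP_3$-freeness. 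Hence, for $\Delta\ge 3p-1$, $e(G)=\Delta+e(G-v)\le \Delta+ex(n-1,(p-1)P_3)$, and a short computation shows $(n-1)+ex(n-1,(p-1)P_3)=ex(n,pP_3)$. Therefore, if $\Delta\le n-2$ the inequality is strict and $G$ is not extremal, while if $\Delta=n-1$ equality forces $e(G-v)=ex(n-1,(p-1)P_3)$; by the induction hypothesis $G-v=K_{p-2}\vee M_{n-p+1}$, and since $v$ is dominating, $G=K_1\vee(K_{p-2}\vee M_{n-p+1})=K_{p-1}\vee M_{n-p+1}$.

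\textbf{Ruling out the low-degree regime.} It remains to exclude $\Delta\le 3p-2$. Here I would use a greedy $P_3$-packing bound: take a maximal family of disjoint $P_3$'s; as $G$ is $pP_3$-free it has size $t\le p-1$, and after deleting its $3t$ vertices the remainder contains no $P_3$ and so is a matching. The deleted vertices carry at most $3t\Delta\le 3(p-1)(3p-2)$ edges, whence $e(G)\le 3(p-1)(3p-2)+\lfloor\frac{n-3(p-1)}{2}\rfloor$. Since $ex(n,pP_3)=(p-\tfrac12)n-O(p^2)$ whereas this bound is only $\tfrac n2+O(p^2)$, it is strictly smaller for $n$ large, so $G$ is again not extremal. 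Combining the three regimes, any extremal graph has $\Delta=n-1$ and equals $K_{p-1}\vee M_{n-p+1}$.

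\textbf{Main obstacle.} The difficulty is the precise threshold. The crude packing bound above needs $n\gtrsim 9p$ rather than the stated $n>5p-1$, and the simple extension step needs $\Delta\ge 3p-1$; closing the gap requires both a sharper extension (rerouting an existing $(p-1)P_3$ through $v$ instead of only centring a fresh $P_3$ at $v$, which lowers the usable degree threshold) and a tighter count of the edges incident to the $P_3$-packing in the low-degree case. Optimising these two estimates is exactly where the hypothesis $n>5p-1$ is used; the remaining book-keeping (checking $n-3(p-1)\ge 2$ free vertices, the arithmetic identity for the edge counts, and the identification $K_1\vee K_{p-2}=K_{p-1}$) is routine.
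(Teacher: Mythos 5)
This statement is not proved in the paper at all: Theorem \ref{p3} is quoted verbatim from Yuan and Zhang \cite{yuan2017}, so there is no internal proof to compare against, and your attempt must be judged on its own merits. Most of your skeleton does check out: the lower-bound construction and the identity $\binom{p-1}{2}+(p-1)(n-p+1)=(p-1)\left(n-\frac{p}{2}\right)$ are correct; the extension step (if $\Delta\ge 3p-1$, a $(p-1)P_3$ in $G-v$ leaves at least two free neighbours of $v$, so $G-v$ is $(p-1)P_3$-free) is correct; the telescoping identity $(n-1)+ex(n-1,(p-1)P_3)=(p-1)\left(n-\frac{p}{2}\right)+\left\lfloor\frac{n-p+1}{2}\right\rfloor$ is correct; and the induction is compatible with the threshold, since $n>5p-1$ implies $n-1>5(p-1)-1$.

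The genuine gap is the one you flag yourself but do not close: the low-degree regime $\Delta\le 3p-2$ is excluded only when $n$ exceeds roughly $9.5p$, so what you have actually proved is the theorem with a threshold of order $9.5p$, not the stated $n>5p-1$. This is not routine optimisation. The threshold $5p-1$ is sharp: the competing $pP_3$-free graph $K_{3p-1}\cup M_{n-3p+1}$ has $\binom{3p-1}{2}+\left\lfloor\frac{n-3p+1}{2}\right\rfloor$ edges, which at $n=5p-1$ equals $\frac{9p^2-7p+2}{2}$, exactly $e\left(K_{p-1}\vee M_{n-p+1}\right)$; this is why the theorem requires strict inequality $n>5p-1$ and why uniqueness would fail at $n=5p-1$. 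Consequently, any proof valid on the whole stated range must defeat this near-optimal competitor, whereas your packing bound $3t\Delta+\left\lfloor\frac{n-3t}{2}\right\rfloor\le 3(p-1)(3p-2)+\left\lfloor\frac{n}{2}\right\rfloor$ is off by nearly a factor of $2$ in the window $5p-1<n\le 9p$ (there the truth is about $\left(p-\frac12\right)n$ while your bound is about $9p^2$). Handling graphs whose dense core resembles $K_{3p-1}$ in that window is precisely the substance of the proof in \cite{yuan2017}, where $ex(n,pP_3)$ is in fact determined for all $n$ and graphs of the form $K_{3p-1}\cup M_{n-3p+1}$ appear as extremal below the threshold. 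Your "rerouting an existing $(p-1)P_3$ through $v$" remark points in a plausible direction but is never carried out, so as written the proposal establishes only a weaker statement: same formula and extremal graph, but on a strictly smaller range of $n$.
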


\begin{theo}{\rm (\cite{yuan2021})}\label{P2n}
	Let $F=\cup_{i=1}^p P_{\ell _i}$ with $\ell_1 \geq \cdots \geq \ell_p\geq 2$, $n\geq \sum_{i=1}^p \ell_i$ and $s_j=\sum_{i=1}^j \ell_i$. If  at most one of $\ell_1,\ell_2,\dots,\ell_p$ is odd, then 
	$$ex(n,F)=\max \left\{[n,s_1, \ell_1], [n,s_2, \ell_2], \ldots, \left[n,s_p, \ell_p \right], (\delta_F-1)\left(n-\frac{\delta_F}{2}\right)  \right\}. $$
	Moreover, if all of $\ell_1,\dots,\ell_p$ are even, then each extremal graph is isomorphic to one of the  graphs as follows: 
	$\E(n,P_{\ell_1})$, $K_{s_j-1}\cup H_j$ ($j=2,...,p$),  $G_1(n,\delta_F-1)$, where $H_j \in$ $\E(n-s_j+1,P_{\ell_j})$ for $j\in [p]$.
\end{theo}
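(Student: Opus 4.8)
The plan is to prove matching lower and upper bounds: explicit constructions for the lower bound and a degree-based structural analysis, built on Theorem~\ref{p1}, for the upper bound. For the lower bound I would verify that each term of the maximum is attained by an $F$-free graph. The term $[n,s_1,\ell_1]=ex(n,P_{\ell_1})$ is attained by any member of $\mathbb{EX}(n,P_{\ell_1})$, which is $F$-free since it is already $P_{\ell_1}$-free. For $2\le j\le p$, the graph $K_{s_j-1}\cup H_j$ with $H_j\in\mathbb{EX}(n-s_j+1,P_{\ell_j})$ has $\binom{s_j-1}{2}+ex(n-s_j+1,P_{\ell_j})=\binom{s_j-1}{2}+[n-s_j+1,\ell_j,\ell_j]=[n,s_j,\ell_j]$ edges; it is $F$-free because $H_j$ contains none of $P_{\ell_1},\dots,P_{\ell_j}$ (each on at least $\ell_j$ vertices) while the clique $K_{s_j-1}$ is too small to hold all $j$ of them simultaneously. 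Finally $G_1(n,\delta_F-1)$ has $\binom{\delta_F-1}{2}+(\delta_F-1)(n-\delta_F+1)=(\delta_F-1)(n-\frac{\delta_F}{2})$ edges and is $F$-free: writing $C$ for its clique of size $\delta_F-1$ and $I$ for its independent set, the vertices of any embedded $P_{\ell_i}$ lying in $I$ form an independent set of the path and so number at most $\lceil\ell_i/2\rceil$; hence $P_{\ell_i}$ uses at least $\lfloor\ell_i/2\rfloor$ vertices of $C$, and summing over $i$ would demand $\delta_F>|C|$ clique vertices.

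For the upper bound, let $G$ be extremal and $F$-free, fix a threshold $t$ of order $s_p$, and let $B=\{v\in V(G):d_G(v)\ge t\}$. The crux is a greedy embedding argument showing $|B|\le\delta_F-1$: if $|B|\ge\delta_F$ one selects $\delta_F$ vertices of $B$, assigns $\lfloor\ell_i/2\rfloor$ of them to the path $P_{\ell_i}$, and—using that each chosen vertex still has many neighbours outside the small set used so far—interleaves them with private low-degree vertices to realize every $P_{\ell_i}$, producing a copy of $F$. Granting $|B|\le\delta_F-1$, the edges incident to $B$ number at most $\binom{|B|}{2}+|B|(n-|B|)$, which over $|B|\le\delta_F-1$ is maximized by the join $G_1(n,\delta_F-1)$. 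It then remains to bound the edges of $G-B$, a graph of maximum degree below $t$; here I would invoke the Faudree--Schelp description (Theorem~\ref{p1}) to argue that, up to the optimum, $G-B$ behaves like a disjoint union of bounded cliques, so that when $B$ contributes little the count is governed instead by $\max_{2\le j\le p}[n,s_j,\ell_j]$.

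The main obstacle is exactly this last step: bounding $e(G-B)$ and interpolating cleanly between the clique regime ($B$ small, $G$ a union of bounded cliques) and the join regime ($|B|=\delta_F-1$ joined to everything), so that the two bounds meet precisely at the stated maximum for every $n\ge s_p$ rather than only asymptotically as in Theorems~\ref{pp}--\ref{kPnZ}. The hypothesis that at most one $\ell_i$ is odd is what makes the arithmetic close exactly: a second odd path would permit the extra independent-set edge of $G_1^+$ (cf.\ Theorem~\ref{pp}) and raise the bound by one, so the greedy embedding and the parity bookkeeping must be arranged to forbid precisely this gain. For the ``moreover'' part I would trace equality through the argument: equality with $G$ being $P_{\ell_1}$-free forces $G\in\mathbb{EX}(n,P_{\ell_1})$ by Theorem~\ref{p1}; equality in the clique regime forces a component $K_{s_j-1}$ together with an extremal $P_{\ell_j}$-free graph $H_j\in\mathbb{EX}(n-s_j+1,P_{\ell_j})$; and equality in the join regime forces $G_1(n,\delta_F-1)$. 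The all-even hypothesis ensures these exhaust the extremal graphs, as it rules out the richer families the odd $+1$ term would create.
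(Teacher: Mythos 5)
First, note that the paper itself does not prove Theorem \ref{P2n}: it is quoted verbatim from Yuan and Zhang \cite{yuan2021}, so there is no internal proof to compare against and your proposal must stand on its own. Your lower bound is correct and complete: the edge counts for $\mathbb{EX}(n,P_{\ell_1})$, for $K_{s_j-1}\cup H_j$, and for $G_1(n,\delta_F-1)$ are right, and the three $F$-freeness arguments (component counting for the clique constructions, the independent-set count forcing $\lfloor \ell_i/2\rfloor$ path vertices into the clique of $G_1$) are sound. The upper bound, however, fails at its declared crux. You take a \emph{constant} degree threshold $t=O(s_p)$ and claim that $|B|\ge \delta_F$ forces a copy of $F$ by choosing $\delta_F$ vertices of $B$ and ``interleaving them with private low-degree vertices.'' This is false: consecutive path vertices taken from $B$ must be joined through a common neighbour, and two vertices of degree $\ge t$ need not share any neighbour when $t$ is constant. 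Concretely, for $F=2P_4$ (so $\delta_F=4$) the graph $4K_{1,m}$ is $F$-free --- it contains no $P_4$ at all --- yet has four vertices of arbitrarily large degree, so the claim $|B|\le\delta_F-1$ fails for $F$-free graphs. The standard repair, taking $t>n/2$ so that any two vertices of $B$ have many common neighbours, is exactly the route behind Theorem \ref{pp} and only yields the result for sufficiently large $n$; but Theorem \ref{P2n} is an exact result for \emph{every} $n\ge s_p$, and for small $n$ (e.g.\ $n=s_p$, where $[n,s_p,\ell_p]$ dominates) no vertex of the extremal graph need have degree anywhere near $n/2$.

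Second, even granting $|B|\le\delta_F-1$, you yourself identify ``the main obstacle'': bounding $e(G-B)$ and interpolating between the clique regime and the join regime so that the bounds meet exactly at $\max_j\bigl\{[n,s_j,\ell_j],\,(\delta_F-1)(n-\tfrac{\delta_F}{2})\bigr\}$ for all $n\ge s_p$. That step is never carried out beyond an appeal to Theorem \ref{p1}, and it is precisely where the real work lies in \cite{yuan2021} (a delicate induction with Erd\H{o}s--Gallai-type arguments, not an asymptotic degree dichotomy). The ``moreover'' characterization of extremal graphs likewise cannot be obtained by ``tracing equality'' through an argument whose central inequality is missing. As it stands, the proposal proves the lower bound only.
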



For  star forests, Lidick\'y et al. \cite{lidicky2013} determined $ex(n, \cup_{j=1}^q S_{a _j})$ for sufficiently large $n$.


\begin{theo}{\rm (\cite{lidicky2013})}\label{Sncup} Let $F=\cup_{j=1}^q S_{a_j}$with $a_1\geq \cdots\geq a_q$. Then for sufficiently large $n$, 
$$ex(n, F) =\max\limits_{1\leq j \leq q} \left\{(j-1)\left(n-\frac{j}{2}\right)+\left \lfloor \frac{a_j-1}{2}(n-j+1) \right \rfloor \right\}$$
 and the extremal graph is $K_{j-1}\vee$\EX$(n-j+1, S_{a_j})$ for some $j\in [q]$.
\end{theo}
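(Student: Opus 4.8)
The plan is to prove matching lower and upper bounds, the upper bound by induction on $q$.

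For the lower bound, for each $j\in[q]$ I take $G_j=K_{j-1}\vee H_j$, where $H_j$ is an extremal $S_{a_j}$-free graph on $n-j+1$ vertices, i.e. a graph of maximum degree at most $a_j-1$ with $\lfloor\frac{a_j-1}{2}(n-j+1)\rfloor$ edges. A direct count gives $e(G_j)=\binom{j-1}{2}+(j-1)(n-j+1)+\lfloor\frac{a_j-1}{2}(n-j+1)\rfloor=(j-1)(n-\frac{j}{2})+\lfloor\frac{a_j-1}{2}(n-j+1)\rfloor$, the $j$-th term of the maximum. To see that $G_j$ is $F$-free, suppose $F\subseteq G_j$ and look at the centres of the $j$ largest stars $S_{a_1},\dots,S_{a_j}$. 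Since $|K_{j-1}|=j-1$, at least one of these $j$ centres lies in $H_j$; say $t\ge 1$ of them do. Each such centre $c$ has degree $a_i\ge a_j$ in its star but $d_{H_j}(c)\le a_j-1$, so at least one leaf of the star at $c$ lies in $K_{j-1}$. As the $j$ stars are vertex-disjoint, the $j-t$ centres in $K_{j-1}$ together with at least $t$ distinct leaves in $K_{j-1}$ require $j$ distinct vertices of $K_{j-1}$, contradicting $|K_{j-1}|=j-1$. Maximising over $j$ gives the lower bound.

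For the upper bound, let $G$ be an $F$-free graph on $n$ vertices with $ex(n,F)$ edges and set $C=\sum_{i=1}^q(a_i+1)$. The base case $q=1$ is $ex(n,S_{a_1})=\lfloor\frac{a_1-1}{2}n\rfloor$, realised by a graph of maximum degree $a_1-1$. For $q\ge 2$, first suppose $G$ has a vertex $v$ with $d_G(v)\ge C$. Then $G-v$ must be $(\cup_{i=2}^q S_{a_i})$-free: otherwise an embedded copy of $\cup_{i=2}^q S_{a_i}$ uses $C-(a_1+1)$ vertices, and since $v$ has at least $C$ neighbours it keeps $a_1$ of them outside this copy, so centring $S_{a_1}$ at $v$ produces $F\subseteq G$. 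By the induction hypothesis on $n-1$ vertices, $e(G-v)\le\max_{2\le j\le q}g(j)$ with $g(j)=(j-2)(n-1-\frac{j-1}{2})+\lfloor\frac{a_j-1}{2}(n-j+1)\rfloor$, and a short computation yields the identity $(n-1)+g(j)=(j-1)(n-\frac{j}{2})+\lfloor\frac{a_j-1}{2}(n-j+1)\rfloor$. Hence $e(G)=d_G(v)+e(G-v)\le(n-1)+\max_{2\le j\le q}g(j)=\max_{2\le j\le q}\{\cdots\}$, within the required maximum (and no worse than it even when $j=1$ is optimal, since then $f(1)\ge f(j)$ for all $j$).

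It remains to treat $\Delta(G)<C$, and this bounded-degree regime is the main obstacle: the deletion step fails because a medium-degree vertex need not have $a_1$ neighbours outside an embedded $\cup_{i=2}^q S_{a_i}$. Here I switch to a star-packing argument. Writing $S_t=\{v:d_G(v)\ge a_t\}$, a greedy embedding can place $q$ vertex-disjoint stars whenever $|S_2|$ is large: having used at most $C$ vertices $U$, at most $C\cdot\Delta(G)=O(1)$ vertices of $S_2$ have a neighbour in $U$, so for $n$ large some centre in $S_2$ retains all its $\ge a_2\ge a_i$ neighbours free. Thus $|S_2|=\Omega(n)$ together with $S_1\ne\emptyset$ forces $F\subseteq G$. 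When $j=1$ is optimal one has $a_1\ge a_2+2$, and then $e(G)>f(1):=\lfloor\frac{a_1-1}{2}n\rfloor$ would give $S_1\ne\emptyset$ and, comparing $\sum_v d_G(v)$ against the cap $a_2-1$ off $S_2$, force $|S_2|(\Delta-a_2+1)>(a_1-a_2)n\ge 2n$, i.e. $|S_2|=\Omega(n)$; this produces $F$, so $e(G)\le f(1)$. When some $j\ge 2$ is optimal the packing bound only gives $e(G)\le f(1)+O(1)$, which suffices since $f(j)-f(1)=\Omega(n)$. The delicate point is extracting the \emph{exact} constant in this regime, which is precisely why the degree-$\ge a_2$ threshold (rather than degree-$\ge a_1$) must be used. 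Finally, tracking equality at each step—$d_G(v)=n-1$ with $G-v$ extremal in the high-degree case, and near-regularity of the low-degree part otherwise—recovers the extremal graph $K_{j-1}\vee\EX(n-j+1,S_{a_j})$.
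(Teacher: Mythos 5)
The paper itself contains no proof of this statement: Theorem \ref{Sncup} is imported from Lidick\'y, Liu and Palmer \cite{lidicky2013} and used as a black box in the proof of Theorem \ref{main}, so there is no in-paper argument to compare yours against; your proof must stand on its own, and it essentially does. The lower bound is correct: the edge count $e(G_j)=(j-1)\left(n-\frac{j}{2}\right)+\left\lfloor \frac{a_j-1}{2}(n-j+1)\right\rfloor$ and the $F$-freeness argument (each of the $j$ largest stars whose centre falls in $H_j$ must send a leaf into $K_{j-1}$, so $j$ distinct vertices of $K_{j-1}$ would be needed) are both right. The high-degree induction step is right, including the identity $(n-1)+g(j)=f(j)$, and the bounded-degree regime is handled soundly by your packing lemma ($S_1\neq\emptyset$ and $|S_2|>C^2+C$ force $F\subseteq G$ when $\Delta(G)<C$), combined with the degree-sum estimate when $f(1)$ is the maximum (where indeed $a_1\geq a_2+2$ for large $n$) and the $\Omega(n)$ gap $f(j^*)-f(1)$ when some $j^*\geq 2$ is strictly better. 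Two small points. First, the dichotomy in your final step should formally be ``$f(1)=\max$'' versus ``$f(1)<\max$'': under a tie the $\Omega(n)$ gap claim is false, but ties are absorbed by the first alternative, so this is wording rather than substance. Second, the extremal-graph characterization is only sketched; the missing (routine) details are that in the high-degree case equality forces $d_G(v)=n-1$ and $G-v\in\mathbb{EX}\left(n-1,\cup_{i=2}^q S_{a_i}\right)$, whence $G=K_1\vee(G-v)$ has the claimed form by induction, while in the low-degree case equality $e(G)=f(1)$ together with $S_1\neq\emptyset$ would again force $|S_2|=\Omega(n)$ and hence a copy of $F$, so $G$ must be $S_{a_1}$-free and therefore lies in $\mathbb{EX}(n,S_{a_1})$. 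With those completions written out, your argument is a valid, self-contained proof of the cited theorem.
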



Lan et al. \cite{lan2019} improved the lower bound of $n$ in Theorem \ref{Sncup} for $q S_{a}$.

\begin{theo}{\rm (\cite{lan2019})}\label{lanS}	
	If $a \geq 3$ and $n \geq q(a^2+a+1) -\frac{a}{2} (a-3)$, then
	$ex(n, q S_{a}) =(q-1)(n-\frac{q}{2})+\left \lfloor \frac{a-1}{2}(n-q+1) \right \rfloor .$
	Moreover, the extremal graph is $K_{q-1} \vee \EX(n-q + 1,S_{a})$.
\end{theo}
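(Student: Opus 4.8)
The plan is to establish the two bounds separately, obtaining the upper bound by induction on $q$. Throughout write $g(n,q)=(q-1)\left(n-\frac q2\right)+\left\lfloor\frac{a-1}{2}(n-q+1)\right\rfloor$ for the claimed value, and set $M=q(a+1)-1$.

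For the lower bound, I would check that $G^\ast=K_{q-1}\vee H$ with $H\in\mathbb{EX}(n-q+1,S_a)$ is $qS_a$-free. In any family of $q$ vertex-disjoint copies of $S_a$, each vertex of $K_{q-1}$ lies in at most one copy, so since $|V(K_{q-1})|=q-1<q$ some copy avoids $K_{q-1}$ entirely and therefore lives inside $H$; but $\Delta(H)\le a-1$, so $H$ contains no $S_a$, a contradiction. Splitting the edges of $G^\ast$ into those inside $K_{q-1}$, those of the join, and those of $H$ gives $e(G^\ast)=\binom{q-1}{2}+(q-1)(n-q+1)+\left\lfloor\frac{a-1}{2}(n-q+1)\right\rfloor=g(n,q)$, which is the required lower bound.

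For the upper bound I would induct on $q$, the base case $q=1$ being the classical identity $ex(n,S_a)=\lfloor\frac{(a-1)n}{2}\rfloor$ for graphs of maximum degree at most $a-1$. The engine of the induction is the arithmetic identity $g(n,q)=g(n-1,q-1)+(n-1)$, in which the floor terms cancel because both have argument $n-q+1$. Let $G$ be $qS_a$-free on $n$ vertices with $n$ above the stated threshold. If $G$ has a vertex $v$ with $d_G(v)\ge M$, then $G-v$ is $(q-1)S_a$-free: otherwise $G-v$ would contain $q-1$ disjoint stars spanning $(q-1)(a+1)$ vertices, and since $d_G(v)\ge M=a+(q-1)(a+1)$ the vertex $v$ keeps at least $a$ neighbours outside them, producing a $q$-th star and hence $qS_a\subseteq G$. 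Induction gives $e(G-v)\le g(n-1,q-1)$, so $e(G)=e(G-v)+d_G(v)\le g(n-1,q-1)+(n-1)=g(n,q)$. Equality forces $d_G(v)=n-1$ and $e(G-v)=g(n-1,q-1)$, so $v$ is universal and $G-v\in\mathbb{EX}(n-1,(q-1)S_a)$; unfolding the induction then yields $G=K_1\vee(G-v)=K_{q-1}\vee H$ with $H\in\mathbb{EX}(n-q+1,S_a)$.

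The remaining case, in which $\Delta(G)<M$, is the one I expect to be the main obstacle, and it is where the hypothesis on $n$ is spent. Here I would take a maximum family of vertex-disjoint copies of $S_a$ and let $U$ be its vertex set; since $G$ is $qS_a$-free we have $|U|\le(q-1)(a+1)$, and by maximality $\Delta(G-U)\le a-1$. Estimating $e(G)=e(G[U])+e(U,V\setminus U)+e(G-U)$ and using $d_G(u)<M$ for every $u\in U$ gives $e(G)\le\binom{|U|}{2}+|U|(M-1)+\frac{(a-1)(n-|U|)}{2}\le\frac{(a-1)n}{2}+C$, where $C=C(a,q)=O(q^2a^2)$ is a constant. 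Since $g(n,q)$ exceeds $\frac{(a-1)n}{2}$ by about $(q-1)n$, the assumption $n\ge q(a^2+a+1)-\frac a2(a-3)$ is exactly strong enough to make the right-hand side strictly smaller than $g(n,q)$, so no extremal graph occurs in this case. The delicate point is purely quantitative: one must verify that this threshold simultaneously dominates the constant $C$ in the bounded-degree case and keeps the induction hypothesis valid (which it does, as $q(a^2+a+1)-\frac a2(a-3)$ decreases by $a^2+a+1\ge1$ when $q$ drops by one). Combining the two cases gives $ex(n,qS_a)=g(n,q)$, and the equality analysis of the first case identifies the extremal graphs as $K_{q-1}\vee H$ with $H\in\mathbb{EX}(n-q+1,S_a)$.
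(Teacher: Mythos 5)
Your statement is one the paper itself never proves: it is quoted from Lan, Li, Shi and Tu \cite{lan2019}, so your proposal can only be measured against the content of the statement, and that content is precisely the explicit threshold $n \ge q(a^2+a+1)-\frac{a}{2}(a-3)$ (the ``sufficiently large $n$'' version is already Theorem \ref{Sncup} of Lidick\'y et al.). Most of your scaffolding is fine: the construction $K_{q-1}\vee H$ and its edge count, the identity $g(n,q)=g(n-1,q-1)+(n-1)$, and the high-degree case (a vertex $v$ with $d_G(v)\ge M=q(a+1)-1$ whose deletion leaves a $(q-1)S_a$-free graph, hence $e(G)\le (n-1)+g(n-1,q-1)$, with the equality analysis forcing $v$ universal) are all correct.

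The gap is exactly where you flagged ``the delicate point'': your claim that the stated threshold ``is exactly strong enough'' to kill the bounded-degree case is false, and you never verify it. Your estimate there is $e(G)\le\binom{|U|}{2}+|U|(M-1)+\frac{(a-1)(n-|U|)}{2}$ with $|U|\le (q-1)(a+1)$ and $M-1=q(a+1)-2$; for this to fall below $g(n,q)$ you need, after dividing the deficit by $q-1$, roughly $n>\frac{3q(a+1)^2}{2}-O(a^2)$, whose leading coefficient $\frac{3(a+1)^2}{2}$ far exceeds $a^2+a+1$. Concretely, for $a=3,q=2$ your bound gives $e(G)\le n+26$ while $g(n,2)=2n-2$, so you need $n>28$, yet the theorem starts at $n=26$; for $a=3,q=10$ you need $n>224$ versus the stated $n\ge 130$, and the discrepancy grows linearly in $q$. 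So on the whole range between the two thresholds your Case 2 cannot conclude (and since the argument is an induction on $q$, this failure infects every level). Closing it requires a genuinely sharper bound on the edges meeting $U$, not just $d_G(u)<M$ for all $u\in U$: for instance, an augmenting argument showing that in each star of the maximal family at most one vertex can have $2a$ or more neighbours in $V\setminus U$ (two such vertices $x,y$ in one star would let you grow two new disjoint stars with leaves in $V\setminus U$, which together with the untouched stars yields $qS_a$), which replaces the term $|U|(M-1)\approx q^2(a+1)^2$ by roughly $(q-1)\bigl(M+2a^2\bigr)$ plus lower-order terms. Some refinement of this kind is what \cite{lan2019} needs to reach a threshold with coefficient $a^2+a+1$, and it is the missing idea in your write-up.
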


Li et al. \cite{li2022} determined the Tur\'an number of $q S_{a}$ for all $n$, where $q \geq 2$ and $a \geq 3$. In fact, when $q=1$ in Theorem \ref{lanS}, we have $ex(n,S_a)= \lfloor \frac{(a-1)n}{2} \rfloor $. The extremal graphs are almost $(a-1)$-regular graphs, which are $(a-1)$-regular if $(a-1)n$ is even, and if $(a-1)n$ is odd then every vertex of each extremal graph has degree $a-1$ except one vertex of degree $a-2$.

%

\vskip 2mm
Beyond the results on the Tur\'an numbers of linear forests and star forests,  Lidick\'y et al. \cite{lidicky2013} started to consider the Tur\'an number of path-star forests. They determined $ex(n, pP_4\cup q S_3)$ for sufficiently large $n$.   Lan et al. \cite{lan2019} proved that the extremal graph for  $pP_4\cup q S_3$ is unique when $n$ is large enough. Zhang and Wang \cite{zhang2022} determined $ex(n, p P_6\cup q S_5)$ and characterized all the extremal graphs. Recently, Fang and Yuan \cite{fang2023} extended all of the above results to a more general situation as below.

\begin{theo}{\rm (\cite{fang2023})}\label{1,k}
	Suppose $n=q+d(\ell-1)+r\geq (\ell^2-\ell+1)q+\frac{\ell^2+3\ell-2}{2}$, where $\ell\geq 4$, $0\leq r<\ell-1$. Then $ex(n, P_{\ell} \cup q S_{\ell-1}) = e(G_2(n,q,\ell)) $. Moreover, 
$$\E (n, P_{\ell} \cup q S_{\ell-1} )=
\begin{cases}
	\left\{G_2(n,q,\ell), G_1\left(n, \frac{\ell}{2}+q-1\right)  \right\}, & \text{if } \ell \text{ is even, } r\in \left\{\frac{\ell}{2}, \frac{\ell-2}{2}\right\}, \\
	\left\{ G_2(n,q,\ell) \right\}, & \text{otherwise}.
\end{cases}$$
\end{theo}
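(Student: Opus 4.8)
The plan is to argue by induction on $q$, with $F=P_\ell\cup qS_{\ell-1}$ and the Faudree--Schelp theorem (Theorem~\ref{p1}) as the base case $q=0$ for the edge count. First I would record the lower bound by checking that $G_2(n,q,\ell)=K_q\vee(dK_{\ell-1}\cup K_r)$ is $F$-free. The point is that $F$ consists of $q+1$ pairwise disjoint connected components, each on exactly $\ell$ vertices, whereas deleting the $q$ apex vertices of $K_q$ leaves $dK_{\ell-1}\cup K_r$, every component of which has at most $\ell-1<\ell$ vertices. A component of $F$ meeting none of the apex vertices would be a connected subgraph on $\ell$ vertices inside one of these small cliques, which is impossible; hence each of the $q+1$ components must use at least one of the $q$ apex vertices, contradicting disjointness. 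Therefore $ex(n,F)\ge e(G_2(n,q,\ell))$.

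For the upper bound, let $G$ be $F$-free on $n$ vertices. The engine of the induction is the reduction: if $G$ has a vertex $v$ with $d_G(v)\ge(q+1)\ell$, then $G-v$ is $F'$-free, where $F'=P_\ell\cup(q-1)S_{\ell-1}$. Indeed a copy of $F'$ in $G-v$ occupies $q\ell$ vertices, so $v$ retains at least $(q+1)\ell-q\ell=\ell>\ell-1$ neighbours outside it; these span an extra $S_{\ell-1}$ centred at $v$, completing a copy of $F$ in $G$, a contradiction. Choosing $v$ of maximum degree and applying the induction hypothesis $e(G-v)\le e(G_2(n-1,q-1,\ell))$ (valid since $n-1$ still meets the hypothesis with $q-1$ in place of $q$), together with the identity $e(G_2(n,q,\ell))=(n-1)+e(G_2(n-1,q-1,\ell))$ obtained by attaching a universal vertex, gives $e(G)=d_G(v)+e(G-v)\le e(G_2(n,q,\ell))$.

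The remaining case $\Delta(G)<(q+1)\ell$ is where the hypothesis on $n$ is spent, and I expect it to be the main obstacle. Here I would extract a maximal family of vertex-disjoint copies of $S_{\ell-1}$ with vertex set $U$, $|U|\le q\ell$. If the family has $q$ members, then $F$-freeness forces $G-U$ to contain no $P_\ell$; if it has at most $q-1$, then $G-U$ has maximum degree at most $\ell-2$. In either case the Erd\H{o}s--Gallai / Faudree--Schelp bound gives $e(G-U)\le\tfrac{\ell-2}{2}\,n+O(\ell^2)$, while the edges meeting $U$ number at most $|U|\Delta<q(q+1)\ell^2$. Since $e(G_2(n,q,\ell))=qn+\tfrac{\ell-2}{2}n-O(q^2+q\ell+\ell^2)$, this yields $e(G)<e(G_2(n,q,\ell))$ as soon as $n$ passes the stated threshold. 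The delicate point is to track the constants so that the threshold is exactly $(\ell^2-\ell+1)q+\tfrac{\ell^2+3\ell-2}{2}$, which requires the precise value of $[n-q,\ell,\ell]$ rather than the crude estimate $\tfrac{\ell-2}{2}n$. In particular every extremal graph satisfies $\Delta(G)\ge(q+1)\ell$, so the reduction applies to it.

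It remains to characterize the extremal graphs. Equality in the reduction forces $d_G(v)=n-1$, so $v$ is universal, and $G-v\in\mathbb{EX}(n-1,F')$; thus $G=v\vee H$ with $H$ ranging over the inductively known extremal family, subject to $G$ being $F$-free. To decide which lifts survive I would use a vertex-cover computation: since $\tau(F)=\tfrac{\ell}{2}+q$, the graph $G_1(n,k)=K_k\vee\overline K_{n-k}$ is $F$-free exactly when $k<\tfrac{\ell}{2}+q$, so $v\vee G_1(n-1,\tfrac{\ell}{2}+q-2)=G_1(n,\tfrac{\ell}{2}+q-1)$ remains $F$-free, while $v\vee G_2(n-1,q-1,\ell)=G_2(n,q,\ell)$ is $F$-free by the components argument. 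Running the filtering from the base $q=1$, where $\mathbb{EX}(n-1,P_\ell)$ is the full Faudree--Schelp family, one checks that the pure clique union lifts to $G_2$, the single block $K_{(\ell-2)/2}\vee\overline K_{\cdots}$ lifts to $G_1$, and every member carrying both a high-degree block and a leftover $K_{\ell-1}$ lifts to a graph in which a star centred in the block together with a path through $v$ using the leftover clique forms a copy of $F$, so it is discarded. This leaves precisely $G_2(n,q,\ell)$ in general and the additional graph $G_1(n,\tfrac{\ell}{2}+q-1)$ when $\ell$ is even and $r\in\{\tfrac{\ell}{2},\tfrac{\ell-2}{2}\}$, in agreement with the statement.
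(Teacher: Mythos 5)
Your strategy is sound in outline and genuinely different from the paper's: you induct on $q$ by deleting a single high-degree vertex, using the reduction ``$d_G(v)\geq (q+1)\ell$ forces $G-v$ to be $P_\ell\cup(q-1)S_{\ell-1}$-free'' and the identity $e(G_2(n,q,\ell))=(n-1)+e(G_2(n-1,q-1,\ell))$, whereas the paper obtains Theorem \ref{1,k} as the case $p=1$, $a=\ell-1$ of Theorem \ref{cupPl}: there the induction step deletes an entire star, the existence of high-degree vertices is forced by comparing $e(G)$ with $ex(n,qS_{\ell-1})$ via the known star-forest results (Theorems \ref{Sncup} and \ref{lanS}), the $q$ high-degree vertices form a set $U$ such that $G-U$ must be $P_\ell$-free, and the bound follows from $e(G)=e(G[U])+e(U,\overline{U})+e(G[\overline{U}])$. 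Your lower bound, your reduction lemma, and your lift-and-filter treatment of the extremal graphs (the vertex-cover criterion for $G_1(n,\ell/2+q-1)$, and discarding every Faudree--Schelp lift at $q=1$ containing both a block $K_{(\ell-2)/2}\vee\overline{K}_{\cdots}$ and a spare $K_{\ell-1}$) are all correct and parallel the paper's equality analysis.

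The genuine gap is the stated threshold, which is part of the statement. In your low-max-degree case you charge the edges meeting $U$ at $|U|\cdot\Delta(G)\leq q\ell\cdot\bigl((q+1)\ell-1\bigr)$; since $e(G_2(n,q,\ell))=\bigl(q+\frac{\ell-2}{2}\bigr)n-O(q^2+q\ell+\ell^2)$, the desired inequality $e(G)<e(G_2(n,q,\ell))$ then needs roughly $qn> q^2\ell^2+\frac{q\ell^2}{2}$, i.e.\ $n\geq q\ell^2+\frac{\ell^2}{2}+O(q+\ell)$, which overshoots the claimed bound $(\ell^2-\ell+1)q+\frac{\ell^2+3\ell-2}{2}=q\ell^2-q\ell+q+\frac{\ell^2+3\ell-2}{2}$ by a term of order $q\ell$; even with the sharpest constants available inside your framework (reduction threshold $q\ell+\ell-1$, subtracting the stars' internal edges), a shortfall of roughly $(q-3)\ell$ persists once $q\geq 4$. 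You have also mislocated the delicacy: replacing $\frac{\ell-2}{2}n$ by the exact value of $[n-q,\ell,\ell]$ only adjusts additive $O(\ell^2)$ terms and cannot recover a loss that scales with $q$. The paper reaches the exact threshold by a different mechanism: the leading term $(\ell^2-\ell+1)q$ is inherited from the sharp star-forest bound $n\geq q(a^2+a+1)-\frac{a}{2}(a-3)$ of Theorem \ref{lanS} (with $a=\ell-1$), which forces $qS_{\ell-1}\subseteq G$ directly from the edge count, with no maximum-degree dichotomy at all. So your argument does prove the Tur\'an number and the extremal family for sufficiently large $n$, but not down to the stated bound on $n$; closing that gap requires the star-forest Tur\'an input (or an equivalent idea), not just more careful bookkeeping in the counting you set up.
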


\begin{theo}{\rm (\cite{fang2023})}\label{k1k2}
	Suppose $n\geq (2\ell^2+3\ell-4)p+(4\ell^2-2\ell+1)q+3$, where $p\geq 2$, $\ell\geq 2$. Then $ex(n, p P_{2\ell} \cup q S_{2\ell-1}) =(\ell p+ q-1)(n-\frac{\ell p+ q}{2}) $. Moreover, \EX$(n, p P_{2\ell} \cup q S_{2\ell-1})=G_1(n, \ell p+ q-1)$.
\end{theo}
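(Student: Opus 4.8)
The plan is to prove the value $ex(n,F)=e\bigl(G_1(n,\ell p+q-1)\bigr)$ and the uniqueness of the extremal graph together, by exhibiting the construction and then proving a matching extremal upper bound. Throughout write $F=pP_{2\ell}\cup qS_{2\ell-1}$ and $k=\ell p+q-1$.

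\emph{Lower bound.} First I would verify that $G_1(n,k)=K_k\vee\overline{K}_{n-k}$ is $F$-free. In any embedding, every edge must meet the clique part $K_k$, so each component must place at least its vertex-cover number of vertices inside $K_k$: a path $P_{2\ell}$ needs at least $\ell$ vertices in $K_k$ (its $2\ell$ vertices contain at most $\ell$ pairwise non-adjacent ones, which is all the independent part can supply), and a star $S_{2\ell-1}$ needs at least one vertex in $K_k$, namely its center (a center placed in $\overline{K}_{n-k}$ would force all $2\ell-1$ leaves into $K_k$, which is worse). Hence a copy of $F$ would require at least $\ell p+q=k+1$ vertices of $K_k$, which is impossible. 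A direct count gives $e(G_1(n,k))=\binom{k}{2}+k(n-k)=(\ell p+q-1)\bigl(n-\tfrac{\ell p+q}{2}\bigr)$, the claimed lower bound for $ex(n,F)$.

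\emph{Upper bound, by induction on $n$.} Let $G$ be $F$-free on $n\ge N_0$ vertices, where $N_0=(2\ell^2+3\ell-4)p+(4\ell^2-2\ell+1)q+3$; I want $e(G)\le e(G_1(n,k))$ with equality only for $G\cong G_1(n,k)$. Let $v$ have minimum degree $\delta=\delta(G)$. Since $G-v$ is $F$-free on $n-1\ge N_0$ vertices, the induction hypothesis gives $e(G-v)\le e(G_1(n-1,k))$, so
$$e(G)=e(G-v)+\delta\le e(G_1(n-1,k))+\delta=e(G_1(n,k))+(\delta-k).$$
If $\delta\le k$ this yields $e(G)\le e(G_1(n,k))$, and equality forces $\delta=k$ and $e(G-v)=e(G_1(n-1,k))$; by the inductive uniqueness $G-v\cong G_1(n-1,k)=K_k\vee\overline{K}_{n-1-k}$. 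It then remains to show that $N_G(v)$ is exactly the clique part $K_k$. If instead $v$ had a neighbor $w$ in the independent part and missed some $u\in K_k$, I would build $F$ by routing one path as $w-v-a_1-z_1-a_2-z_2-\cdots$ with $a_i\in K_k\setminus\{u\}$ and $z_i$ in the independent part, using the edge $vw$ so that $v$ replaces one clique vertex; this path spends only $\ell-1$ vertices of $K_k$, while the remaining $p-1$ paths and $q$ stars spend $(p-1)\ell+q$ more, a total of exactly $k$ clique vertices, producing $F\subseteq G$ — a contradiction. Hence $N_G(v)=K_k$ and $G\cong G_1(n,k)$.

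\emph{Main obstacle.} The crux is to rule out $\delta(G)\ge k+1$, i.e.\ to prove the embedding statement: if $\delta(G)\ge\ell p+q$ and $n\ge N_0$, then $F\subseteq G$. A clean sub-case is when $G$ has at least $\ell p+q$ vertices of degree close to $n$: these serve as the $\ell$ internal ``hubs'' of each path (placed at alternate positions, so every path edge meets a hub) and as the $q$ star centers, and since the components together occupy only $2\ell(p+q)$ vertices, each hub always finds enough fresh neighbors and $F$ embeds greedily. The delicate case is when such hubs are scarce; here one must extract the $p+q$ components one at a time while controlling the interference created by deleting the $2\ell(p+q-1)$ already-used vertices, so that the remaining graph still contains the next $P_{2\ell}$ (via an Erd\H{o}s--Gallai type path bound) or $S_{2\ell-1}$. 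I expect this bookkeeping to be the main technical work, and the precise threshold $N_0$ — which budgets roughly $2\ell^2$ vertices per path and $4\ell^2$ per star — is exactly what guarantees the extraction never runs out of room; it also settles the base case $n=N_0$ of the induction. Combining the construction with the upper bound gives $ex(n,F)=e(G_1(n,k))$ and $\mathbb{EX}(n,F)=\{G_1(n,\ell p+q-1)\}$, consistent with Theorem \ref{kPnZ} in the instance $q=0$.
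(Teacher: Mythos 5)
Your lower bound is correct (the vertex-cover count in $G_1(n,k)$, $k=\ell p+q-1$, is exactly the paper's argument), but the upper bound contains a genuine gap, and it sits precisely where the theorem's whole content lies. Your induction on $n$ reduces everything to the claim that $\delta(G)\ge \ell p+q$ and $n\ge N_0$ force $F\subseteq G$, and this claim is never proved: you offer a two-case sketch (``hubs'' when there are many near-universal vertices, ``extraction with bookkeeping'' otherwise) and yourself call it ``the main technical work.'' The sketch does not survive scrutiny as written. After you remove an already-embedded component, the minimum-degree hypothesis no longer holds in what remains, so there is nothing to hand to an Erd\H{o}s--Gallai type bound, which in any case controls edge counts rather than propagating a degree condition; making this extraction rigorous is not bookkeeping but the actual theorem. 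A proof proposal whose central lemma is conceded rather than proved establishes only the construction.

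The second gap is the base case. At $n=N_0$ the induction hypothesis is unavailable, and your remark that the threshold ``settles the base case'' is not an argument: one must show $e(G)\le e(G_1(N_0,k))$ for every $F$-free graph on exactly $N_0$ vertices, which is the full theorem at that value of $n$. The standard repair---keep deleting minimum-degree vertices below $N_0$ and finish with the trivial bound $\binom{n_1}{2}$---fails here: the accounting $\binom{n_1}{2}+k(n-n_1)\le kn-\binom{k+1}{2}$ holds only for $n_1\in\{k,k+1\}$, while the deletion process can stall at, say, $K_{k+2}$, which has minimum degree $k+1$ yet is $F$-free because $|F|=2\ell(p+q)>k+2$; the resulting bound then overshoots $e(G_1(n,k))$ by $1$. (There is also a small fixable slip in your equality analysis: if all $k$ neighbors of $v$ lie in the independent part, your routing $w$--$v$--$a_1$--$\cdots$ needs the variant $a_1$--$z$--$v$--$w$--$a_2$--$\cdots$, which still saves one clique vertex.) For contrast, the paper proves this statement by an entirely different route that avoids both difficulties: it inducts on the number of stars $q$, shows that in an extremal graph each star must contain a vertex of degree $\Omega(n)$, collects these $q$ vertices into a set $U$, proves $G-U$ is $pP_{2\ell}$-free, and then quotes the known Tur\'an numbers and extremal graphs for linear forests (Theorem \ref{kPnZ}, Theorem \ref{P2n} and Lemma \ref{UP2l}); Theorem \ref{k1k2} is then the special case $\ell_i=2\ell$, $a=2\ell-1$ of Theorem \ref{cupPl}. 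If you want to keep your route, you must prove the embedding lemma with explicit dependence on $n$ and supply an independent base case; otherwise the reduction to linear forests is the cleaner path.
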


One can see that the two theorems above only deal with the path-star forests in which all paths and stars have the same order. In this paper, we try to determine the Tur\'an number of any path-star forest $F=\mathop{\cup}\limits _{i=1}^{p} P_{\ell _i} \mathop{\cup}\limits _{j=1}^{q} S_{a _j}$ with $p+q \geq 1$, and the corresponding extremal graphs. For convenience in stating our main result, we introduce a parameter with respect to $F$ as below. 
 Recall that $\delta_F=\sum\limits_{i=1}^{p} \left\lfloor\frac{\ell_i}{2}\right\rfloor$. Define 
 $$\beta_F= q+\delta_F-\mu_F,$$ 
where $\mu_F=1$  if $p=1$ and  $\ell_1$  is even or  $p\geq 2$ and $\exists \ \ell_i\neq 3$, and $\mu_F=\frac{1}{2}$ otherwise.
\vskip 3mm

The main result of this paper is the following.
\begin{theo}\label{main}
	Let $F=\mathop{\cup}\limits _{i=1}^{p} P_{\ell _i} \mathop{\cup}\limits _{j=1}^{q} S_{a _j}$ with $p+q\geq 1$ and  $a_1\geq \cdots \geq  a_{q}\geq 3$.
	Then for sufficiently large $n$,
	\begin{equation}\label{eq1main}
		ex(n,F)=\binom{q}{2}+q(n-q)+ex\left(n-q, \mathop{\cup}\limits _{i=1}^{p} P_{\ell _i}\right)
	\end{equation}
	if $q=0$, or $\beta_F> \max\limits _{1 \leq j\leq q} \big\{j-1+\frac{a_j-1}{2}\big\}$ and $a_q\geq \ell_1-1$ in the case when $p=1$, and  $ex(n,F)=ex\big(n, \cup_{j=1}^{q} S_{a _j} \big)$  if $\beta_F\leq  \max\limits _{1 \leq j\leq q} \big\{j-1+\frac{a_j-1}{2}\big\}$. 
	
	Moreover, in the former case, $\EX(n,F)=K_{q}\vee \EX\big(n-q, \cup_{i=1}^{p} P_{\ell _i} \big)$ if  $p\geq 2$ and
	$$\E(n,F)=\begin{cases}
		\big\{G_2\big(n,q,\ell_1\big) \big\}\cup \mathbb{G},	& \text{ if } \ell_1 \text{ is even and } r \in \big\{\frac{\ell_1}{2},\frac{\ell_1-2}{2} \big\}, \\[2pt]
		\big\{G_2\big(n,q,\ell_1\big)\big \}, & \text{ otherwise, }  
	\end{cases}$$
   when $p=1$ and  $a_{q}\geq \ell_1-1$, where $\,n=q+d(\ell_1 -1)+r$, $0\leq r < \ell_1-1$, 
	$$\mathbb{G}=\begin{cases}
		\{G(d-1), G(0) \}, & \mbox{if } a_{q}=\ell_1=4,\ r=1, \\[1pt]
		\{G(d-1)\}, & \mbox{otherwise,}
	\end{cases}$$
	and $G(s)= K_{q} \vee \left( (d-s-1) K_{\ell_1-1}\cup 
	\left(K_{\frac{\ell_1-2}{2}} \vee
	\overline{K}_{\frac{\ell_1}{2}+s(\ell_1-1)+r}
	\right) \right)$; in the latter case,
	$\EX(n,F)=\EX\big(n, \cup_{j=1}^{q} S_{a _j} \big)$.
\end{theo}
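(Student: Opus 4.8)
The plan is to prove matching lower and upper bounds, the two regimes of the statement corresponding exactly to which of two natural constructions carries more edges. Throughout write $A_n=\binom{q}{2}+q(n-q)+ex\big(n-q,\cup_{i=1}^{p}P_{\ell_i}\big)$ and $B_n=ex\big(n,\cup_{j=1}^{q}S_{a_j}\big)$. By Theorems \ref{pp}, \ref{p1}, \ref{p3} and \ref{Sncup} the leading coefficient of $A_n$ in $n$ is $\beta_F$ (one checks this case by case: $\delta_F-1$ from the path part when $\mu_F=1$, and $\delta_F-\tfrac12$ in the single-odd-path or all-$P_3$ situations when $\mu_F=\tfrac12$), while that of $B_n$ is $\max_{j}\{j-1+\frac{a_j-1}{2}\}$; hence the dichotomy $\beta_F\gtrless\max_j\{\cdots\}$ is precisely the comparison $A_n\gtrless B_n$ for large $n$. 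The case $q=0$ is just the linear-forest statement and is immediate from the cited theorems.

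For the lower bound I would exhibit the two constructions and verify $F$-freeness by hub-counting. Since $\cup_j S_{a_j}$ is a subforest of $F$, every $\cup_j S_{a_j}$-free graph is $F$-free, giving $ex(n,F)\ge B_n$. For $K_q\vee\E(n-q,\cup_i P_{\ell_i})$, delete the $q$ dominating vertices; the remainder is $\cup_i P_{\ell_i}$-free, and a copy of $F$ has $p+q$ components while only $q$ dominating vertices are available, so some component lives entirely in the non-dominating part. When $p\ge2$ one refines this in $G_1(n,q+\delta_F-1)=K_{q+\delta_F-1}\vee\overline{K}$: a path $P_{\ell_i}$ consumes at least $\lfloor\ell_i/2\rfloor$ dominating vertices and each star at least one, for a total of at least $q+\delta_F$, exceeding the $q+\delta_F-1$ available; the all-odd case refines the count to explain the extra edge of $G_1^+$. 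When $p=1$ the leftover $dK_{\ell_1-1}\cup K_r$ has maximum degree $\ell_1-2$, so a hubless star forces $a_q\le\ell_1-2$; the hypothesis $a_q\ge\ell_1-1$ is exactly what rules this out, while a hubless path is impossible as every component has fewer than $\ell_1$ vertices. This yields $ex(n,F)\ge A_n=e(G_2(n,q,\ell_1))$.

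For the upper bound, let $G$ be $F$-free with $n$ large, put $\theta=2|V(F)|$ and $W=\{v:d_G(v)\ge\theta\}$. The engine is a greedy embedding lemma: if some $q$ vertices of degree at least $\theta$ have the property that deleting them leaves a graph still containing $\cup_i P_{\ell_i}$, then $G\supseteq F$, since one embeds the paths in the remainder and then grows the $q$ stars one at a time at those vertices, each having enough free neighbours because only $O(1)$ vertices are ever forbidden. Consequently, if $|W|\ge q$ then deleting any $q$ vertices of $W$ leaves a $\cup_i P_{\ell_i}$-free graph, so $e(G)\le\binom{q}{2}+q(n-q)+ex(n-q,\cup_i P_{\ell_i})=A_n$. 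If $|W|<q$ I split on whether $G$ contains $\cup_j S_{a_j}$: if not, $e(G)\le B_n$ directly; if it does, fix such a copy $C$, note $G-V(C)$ is $\cup_i P_{\ell_i}$-free, and count the edges inside, across, and outside $V(C)$. As $V(C)$ meets $W$ in fewer than $q$ vertices and all other vertices have degree below $\theta$, this gives $e(G)\le(|W|+\delta_F-1)n+O(1)\le(q+\delta_F-2)n+O(1)$, which is less than $A_n$ for large $n$ because $q+\delta_F-2<\beta_F$. In all cases $e(G)\le\max\{A_n,B_n\}$, and combining with the constructions determines $ex(n,F)$ in each regime.

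It remains to characterize the extremal graphs, where the real work lies. Tracing equality in the subcase $|W|\ge q$ forces the $q$ deleted hubs to be dominating and $G$ minus them to lie in $\E(n-q,\cup_i P_{\ell_i})$, so $G=K_q\vee H$ with $H\in\E(n-q,\cup_i P_{\ell_i})$; for $p\ge2$ this is exactly $G_1$ or $G_1^+$ and one is done. The hard part is $p=1$: Theorem \ref{p1} shows that when $\ell_1$ is even and $r\in\{\ell_1/2,(\ell_1-2)/2\}$, the set $\E(n-q,P_{\ell_1})$ contains, besides $dK_{\ell_1-1}\cup K_r$, the whole family $(d-s-1)K_{\ell_1-1}\cup(K_{(\ell_1-2)/2}\vee\overline{K})$, and joining each with $K_q$ produces a candidate $G(s)$. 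I expect the main obstacle to be deciding, by the same hub-counting as in the lower bound, which candidates are genuinely $F$-free: only $G(d-1)$ survives in general, with the extra solution $G(0)$ arising precisely in the exceptional configuration $a_q=\ell_1=4,\ r=1$, matching $\mathbb{G}$. A second delicate point is the boundary $\beta_F=\max_j\{j-1+\frac{a_j-1}{2}\}$, where the leading terms of $A_n$ and $B_n$ agree; there one must compare the bounded correction terms from the explicit formulas of Theorems \ref{pp}, \ref{p1}, \ref{p3} and \ref{Sncup} to confirm that equality is correctly assigned to the star-forest regime.
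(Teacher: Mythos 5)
Your overall route is genuinely different from the paper's and, at the level of the edge counts, sound. The paper proves both regimes by induction on $q$: it peels off one star at a time, uses the induction hypothesis to show each star of an embedded $\cup_j S_{a_j}$ contains a vertex of linear degree, and in the regime $\beta_F\leq\max_j\{j-1+\frac{a_j-1}{2}\}$ it additionally needs the index machinery of Lidick\'y et al.\ (the maximizing index $i^*$, their Claims~1--2) and a case analysis on $i^*$. You replace all of this by a single direct decomposition: the set $W$ of vertices of degree at least $\theta=2|V(F)|$, a greedy embedding lemma (which is correct: when a star is grown at a hub, fewer than $|V(F)|$ vertices are ever forbidden, so degree $\theta$ suffices), and a three-way split on $|W|\geq q$ versus $|W|<q$ with or without a copy of $\cup_j S_{a_j}$. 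This avoids induction entirely and is arguably cleaner. One small slip: in the subcase $|W|<q$ with $\cup_j S_{a_j}\subseteq G$ you bound $e(G-V(C))$ by $(\delta_F-1)n+O(1)$, but when all $\ell_i=3$, or $p=1$ with $\ell_1$ odd, the path Tur\'an coefficient is $\delta_F-\tfrac12$; the correct bound $(q+\delta_F-\tfrac32)n+O(1)$ still beats both $A_n$ and $B_n$ since $\mu_F\leq 1<\tfrac32$, so this is harmless.

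There are, however, two genuine gaps, both of which you flag but neither of which you close, and both are part of the theorem's statement. First, your dichotomy ``$A_n\gtrless B_n$ iff $\beta_F\gtrless\max_j\{\cdots\}$'' is only a comparison of leading coefficients, so it says nothing at the boundary $\beta_F=\max_j\{j-1+\frac{a_j-1}{2}\}$, which the theorem assigns to the star regime. Settling this requires exactly the paper's Claim~3: a three-case computation (for $p=1$; $p\geq2$ with all $\ell_i=3$; $p\geq 2$ with some $\ell_i\neq3$) comparing the constant terms $\gamma_F$ of $A_n$ against $\frac{(i^*-1)(i^*+a_{i^*}-1)}{2}$, using the explicit relation between $a_{i^*}$ and $\delta_F$ forced by equality of the leading coefficients. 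Without it, both the value and the characterization $\EX(n,F)=\EX(n,\cup_j S_{a_j})$ are unproven in that boundary case, and your subcase $|W|\geq q$ cannot be ruled out as a source of extremal graphs. Second, the determination of $\mathbb{G}$ when $p=1$ is asserted (``I expect \dots only $G(d-1)$ survives \dots with the extra solution $G(0)$'') rather than proved. What is needed is the exact criterion: for $0\leq s\leq d-2$, $G(s)$ is $F$-free if and only if $(s+1)(\ell_1-1)+r\leq a_q$ (the ``if'' by your hub counting, since the block $K_{\frac{\ell_1-2}{2}}\vee\overline{K}_{\frac{\ell_1}{2}+s(\ell_1-1)+r}$ then contains no $S_{a_j}$ and no $P_{\ell_1}$; the ``only if'' by explicitly assembling $F$ from an $S_{a_q}$ inside that block, a $P_{\ell_1}$ through a $K_{\ell_1-1}$ component plus one hub, and the remaining stars on the other hubs), together with the observation that $\beta_F>\max_j\{\cdots\}$ forces $a_q\leq\ell_1$, hence $a_q\in\{\ell_1-1,\ell_1\}$; combining these with $r\in\{\frac{\ell_1}{2},\frac{\ell_1-2}{2}\}$ and $a_q\geq3$ pins down $s=0$, $a_q=\ell_1=4$, $r=1$ as the unique exception. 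Both computations are short but indispensable; as written, your proposal proves the theorem only off the boundary and only up to the unverified description of $\mathbb{G}$.
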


The proof of Theorem \ref{main} will be presented in Section \ref{PS}. In Section \ref{remark}, we improve the range of $n$ in Theorem \ref{main} for $\cup _{i=1}^{p} P_{\ell _i} \cup q S_a$, which contains Theorems \ref{1,k} and  \ref{k1k2} as special cases.

\section{Proof of Theorem \ref{main}}\label{PS} 

\hspace{1.5em} If $q=0$, then $F=\cup _{i=1}^{p} P_{\ell _i}$, and so the result follows from Theorems \ref{p1}, \ref{pp} and \ref{p3}. Now, we assume that $q\geq 1$ and show Theorem \ref{main} in two separate parts.

\vskip 3mm
\noindent \textbf{Part I.} $\beta_F> \max\limits _{1 \leq j\leq q} \big\{j-1+\frac{a_j-1}{2}\big\}$ and $a_q\geq \ell_1-1$  when $p=1$.
\vskip 2mm

It is not difficult to check the sizes of the graphs described in Theorem \ref{main} are the expected value of
$ex(n, F)$. So to obtain the lower bound for $ex(n, F)$, it suffices to show that $K_{q}\vee \EX\big(n-q,\cup_{i=1}^{p} P_{\ell _i} \big)$ for  $p\geq 2$ and $G_2\big(n,q,\ell_1\big)$ are $F$‐free.

If $p=1$ and $G_2(n,q,\ell_1)=K_{q} \vee (d K_{\ell_1-1} \cup K_r)$ contains $F$ as a subgraph, then for any $j\in[q]$, $S_{a_j}$  contains at least one vertex of $K_{q}$ since $a_{q}\geq \ell_1-1$, and $P_{\ell_1}$ contains at least
one vertex of $K_{q}$, which implies $K_{q}$ has $q+1$ vertices,  a contradiction. Hence, $G_2(n,q,\ell_1)$ is $F$-free.

If $p\geq 2$ and at least
one $\ell_i$ is not $3$, then $K_{q}\vee \EX\big(n-q, \cup_{i=1}^{p} P_{\ell _i} \big)=G_1^+(n, q+\delta_F-1)$ or $G_1(n,q+\delta_F-1)$ by Theorem \ref{pp}.  Since each $S_{a_j}$ contains at least one vertex of $K_{q+\delta_F-1}$ and each $P_{\ell_i}$ contains at least $\lfloor\frac{\ell_i}{2}\rfloor$ vertices of $K_{q+\delta_F-1}$ in $G_1^+(n, q+\delta_F-1)$ or $G_1(n,q+\delta_F-1)$, this requires $K_{q+\delta_F-1}$ having $q+\delta_F$ 
vertices to form an $F$, which is impossible. Hence, $K_{q}\vee \EX\big(n-q, \cup_{i=1}^{p} P_{\ell _i} \big)$ is $F$-free in this case.

If $p\geq 2$  and  $\ell_1=\cdots=\ell_{p}=3$, then $K_{q}\vee \EX\big(n-q, \cup_{i=1}^{p} P_{\ell _i} \big)=K_{p+q-1}\vee M_{n-p-q+1}$ by Theorem \ref{p3}. If $K_{p+q-1}\vee M_{n-p-q+1}$ contains $F$ as a subgraph, then each $S_{a_j}$ and each $P_3$ contains at least
one vertex of $K_{p+q-1}$, which requires  $K_{p+q-1}$ having $p+q$ vertices,  a
contradiction. Hence, $K_{q}\vee \EX\big(n-q, \cup_{i=1}^{p} P_{\ell _i} \big)$ is also $F$-free in this case.
 
Therefore, we have
\begin{equation}\label{geq}
	ex(n,F)\geq \binom{q}{2}+q(n-q)+ex\left(n-q, \mathop{\cup}\limits _{i=1}^{p} P_{\ell _i}\right).
\end{equation}

\vskip 2mm
Now, we proceed to prove Part I of Theorem \ref{main} by induction on $q$. 

By Theorem \ref{p1}, Theorem \ref{pp} and Theorem \ref{p3}, we have 
\begin{equation}\label{case1}
	\binom{q}{2}+q(n-q)+ex\left(n-q, \mathop{\cup}\limits _{i=1}^{p} P_{\ell _i}\right)=\beta_F\cdot n- \gamma_F,
\end{equation}
where 
\begin{equation}\label{case11}
	\gamma_F=\begin{cases}
		\frac{q^2+(\ell_1-1)(q+r)-r^2}{2},	& \text{ if } p=1 \text{ and } a_{q}\geq \ell_1-1, \\[3.5pt]
		\frac{(p+q)^2-1}{2}+c_1,	&  \text{ if } p\geq 2 \text{ and } \ell_1=\cdots=\ell_{p}=3, \\[4pt]
		\frac{1}{2} (q+\delta_F) (q+\delta_F -1)-c_2, & \text{ if } p\geq 2, \exists\ \ell_i \neq 3,
	\end{cases}
\end{equation}
$c_1 = \frac{1}{2}$ if $n-p-q+1$ is odd and $c_1 = 0$ otherwise, $c_2 = 1$ if all $\ell_i$ are odd and $c_2 = 0$ otherwise.

Let $G\in \mathbb{EX}(n, F)$. Then $e(G)=ex(n,F)$. Since $\beta_F> \max\limits _{1 \leq j\leq q} \big\{j-1+\frac{a_j-1}{2}\big\}$ and $n$ is sufficiently large, by (\ref{geq}) and (\ref{case1}), we have
\begin{equation}\label{exSn}
	\begin{split}
		e(G)& \geq \binom{q}{2}+q(n-q)+ex\left(n-q, \mathop{\cup}\limits _{i=1}^{p} P_{\ell _i}\right)= \beta_F\cdot n- \gamma_F\\
		&> \max_{1\leq j \leq q} \left\{ \left\lfloor \left(j-1+\frac{a_j-1}{2}\right) n-  \frac{1}{2} (j-1)\big(j+a_j-1\big) \right\rfloor \right\}\\
		&= ex \left(n,\mathop{\cup}\limits _{j=1}^{q} S_{a _j}\right),
	\end{split}
\end{equation}
which implies that $G$ contains $\cup_{j=1}^{q} S_{a _j}$ as a subgraph by Theorem \ref{Sncup}.

We first prove (\ref{eq1main}) holds and then discuss the extremal graphs. It is clear that (\ref{eq1main}) holds for $q=0$. Assume that $q=1$. Since $G$ contains $S_{a_1}$ as a subgraph, $G-S_{a_1}$ is $\cup _{i=1}^{p} P_{\ell _i}$-free. Let $m_0$ be the number of edges incident with the vertices of $S_{a_{1}}$ in $G$. Since $n$ is sufficiently large, by (\ref{geq}), we have 
\begin{equation*}
	\begin{split}
			m_0 & =e(G)-e(G-S_{a_1})\geq n-1+ex \left(n-1, \mathop{\cup}\limits _{i=1}^{p} P_{\ell _i}\right)-ex \left(n-a_1-1,\mathop{\cup}\limits _{i=1}^{p} P_{\ell _i} \right) \\
			& \geq n-1 \geq (a_1+1)\left(\sum_{i=1}^{p} \ell_i+a_1 \right).
	\end{split} 
\end{equation*}
Thus the $S_{a_1}$ has a vertex $v$ such that $d(v)\geq \sum_{i=1}^{p} \ell_i+a_1$. It implies that $G-v$ is $\cup _{i=1}^{p} P_{\ell _i}$-free and so $e(G-v)\leq 
ex(n-1, \cup _{i=1}^{p} P_{\ell _i})$. Because $e(G)=d(v)+e(G-v)$ and $d(v)\leq n-1$, we have 
\begin{equation}\label{q=1}
	e(G)\leq n-1+ex \left(n-1, \mathop{\cup}\limits _{i=1}^{p} P_{\ell _i}\right).
\end{equation}
Combining (\ref{geq}) with (\ref{q=1}), (\ref{eq1main}) holds.

Suppose that $q\geq 2$ and Theorem \ref{main} holds for $q-1$. Since $G$ is $F$-free, $G-S_{a_{1}}$ is also $(F-S_{a_{1}})$-free. Note that $\beta_{F-S_{a_{1}}}=\beta_F-1 > \max\limits _{2 \leq j\leq q} \big\{(j-1)-1+\frac{a_j-1}{2}\big\}$. By induction hypothesis, we have
\begin{equation}\label{-Sa}
	\begin{split}
		e\left(G-S_{a_1}\right)&\leq ex\left(n-a_1-1, F-S_{a_1}\right)\\
		&\leq \binom{q-1}{2}+(q-1)(n-a_1-q)+ex\left(n-a_1-q, \mathop{\cup}\limits _{i=1}^{p} P_{\ell _i}\right).
	\end{split}
\end{equation}
Let $m_0$ be the number of edges incident with the vertices of the $S_{a_{1}}$ in $G$. Since $n$ is sufficiently large, by (\ref{geq}) and (\ref{-Sa}), we have
\begin{equation}\label{m0}
		m_0 =  e(G)- e\left(G-S_{a_1}\right) \geq n+O(1).
\end{equation}
Since $n$ is sufficiently large, the $S_{a_1}$ contains a vertex $v$ of degree at least $\sum_{i=1}^{p} \ell_i+ \sum_{j=1}^{q} a_j+q-1$ in $G$. Then $G-v$ is $(F-S_{a_1})$-free. By induction hypothesis, we have $e(G-v)\leq 
ex(n-1, F-S_{a_1}) \leq \binom{q-1}{2}+(q-1)(n-q)+ex\left(n-q, \mathop{\cup}\limits _{i=1}^{p} P_{\ell _i}\right)$. Thus
\begin{equation}\label{q>1}
	\begin{split}
		e(G)&=d(v)+e(G-v)
		\leq n-1+ex(n-1, F-S_{a_1})\\ 
		&\leq \binom{q}{2}+q(n-q)+ex\left(n-q, \mathop{\cup}\limits _{i=1}^{p} P_{\ell _i}\right). 
	\end{split}	
\end{equation}
Combining (\ref{geq}) with (\ref{q>1}), (\ref{eq1main}) holds.

\vskip 2mm
Finally, we characterize the extremal graphs for $F$. 

When $q=1$, the equality holds in (\ref{q=1}) if and only if $G=K_1 \vee \EX (n-1, \cup _{i=1}^{p} P_{\ell _i})$ is $F$-free. If $p\geq 2$, noting that $G_1^+(n,q+\delta_F-1)$ or $G_1(n,q+\delta_F-1)$ or $K_{p+q-1}\vee M_{n-p-q+1}$ is $F$-free, as shown before, by  Theorems \ref{pp} and \ref{p3}, we have $G=K_{1}\vee \EX(n-1, \cup _{i=1}^{p} P_{\ell _i} )$.

If $p=1$, then $\EX (n-1, P_{\ell_1})=d K_{\ell_1-1} \cup K_r$ or $\EX (n-1, P_{\ell_1})\in \{(d-s-1) K_{\ell_1-1}\cup H~|~0\leq s\leq d-1\}$ by Theorem \ref{p1}, where $n-1=d(\ell_1-1)+r$, $0\leq  r<\ell_1-1$ and 
$$H=K_{\frac{\ell_1-2}{2}} \vee
\overline{K}_{\frac{\ell_1}{2}+s(\ell_1-1)+r}.$$
For the former one, we have $G=G_2(n,1,\ell_1)$ since $K_1\vee \EX (n-1, P_{\ell_1})=G_2(n,1,\ell_1)$ is $F$-free as shown before. For the latter one, We first prove the following claim.

\noindent \textbf{Claim 1.} If $\ell_1$ is even and $r \in \{\frac{\ell_1}{2},\frac{\ell_1-2}{2} \}$, then for $0\leq s\leq d-2$, $G(s)$ is $F$-free if and only if $s=0$, $a_q=\ell_1=4$ and $r=1$.
\begin{proof}
	Assume that $0\leq s\leq d-2$. 
	We first show that  $G(s)$ is $F$-free if and only if $(s+1)(\ell_1-1)+r\leq a_q$.  It is clear that $|H|=(s+1)(\ell_1-1)+r$ and $H$ has no $P_{\ell_1}$. If $(s+1)(\ell_1-1)+r\leq a_q$, then $H$ contains no $S_{a_j}$ and so $G(s)$ is $F$-free because 
	each $S_{a_j}$ and the $P_{\ell_1}$ in $F$ must use at least one vertex of the $K_q$,  which is impossible. If $(s+1)(\ell_1-1)+r\geq a_q+1$, then $H$ contains an $S_{a_q}$. Since $d-s-1\geq 1$,  take one copy of $K_{\ell_1-1}$ and one vertex of the $K_q$, we can get a $P_{\ell_1}$. If $q=1$, then $G(s)$ contains an $F$. If $q\geq 2$, delete the $S_{a_q}$ and $P_{\ell_1}$ from $G(s)$, we can see the remaining graph contains a subgraph $G_1(n-\ell_1-a_q-1,q-1)$. Because $n$ is sufficiently large,  $G_1(n-\ell_1-a_q-1,q-1)$ contains $\cup_{j=1}^{q-1}S_{a_j}$ as a subgraph, and so 
	$G(s)$ contains an $F$.
	
	Since $a_q\geq \ell_1-1$, and $\beta_F> \max\limits _{1 \leq j\leq q} \big\{j-1+\frac{a_j-1}{2}\big\}$ implies $a_q\leq \ell_1$, we have $a_q=\ell_1-1$ or $\ell_1$.
	By $(s+1)(\ell_1-1)+r\leq a_q$, we have $s\leq \frac{a_q-r}{\ell_1-1}-1$.
	Moreover,  note that $a_q\geq 3$, $s\geq 0$, $\ell_1$ is even and $r \in \{\frac{\ell_1}{2},\frac{\ell_1-2}{2} \}$,  we have $s=0$, $a_q=\ell_1=4$ and $r=1$.
\end{proof}

Since $G(d-1)=G_1(n,\ell_1/2)$ is $F$-free, by Claim 1, we have 
$$\mathbb{G}=\begin{cases}
	\{G(d-1), G(0) \}, & \mbox{if } a_{q}=\ell_1=4,\ r=1, \\[1pt]
	\{G(d-1)\}, & \mbox{otherwise.}
\end{cases}$$

Therefore, the extremal graphs in Part I of Theorem \ref{main} hold for $q=1$. Assume that $q\geq 2$. Then the equality holds in (\ref{q>1}) if and only if $G=K_1 \vee \EX (n-1, F-S_{a_1})$ is $F$-free. By induction hypothesis, we get the extremal graphs $\EX (n-1, F-S_{a_1})$ for $q-1$. And it is easy to verify that the extremal graphs in Part I of Theorem \ref{main} hold for $q$. 

The proof of Part I is completed.

\vskip 3mm
\noindent \textbf{Part II.} $\beta_F\leq \max\limits _{1 \leq j\leq q} \big\{j-1+\frac{a_j-1}{2}\big\}$.
\vskip 2mm

If $p=0$, then the result holds by Theorem \ref{Sncup}. So we assume $p\geq 1$.
Let 
$$H(n,j)=K_{j-1}\vee \EX\left(n-j+1, S_{a_j}\right),~~f_j=j-1+\frac{a_j-1}{2}$$ 
and $i^*$ be the index maximizing
the number of edges of $H(n,j)$, i.e., 
$$e(H(n,i^*))=\max\limits_{1\leq j \leq q} \{ e(H(n,j))\}=ex\left(n,\mathop{\cup}\limits_{j=1}^{q} S_{a _j}\right).$$  
Since $n$ is sufficiently large,  $f_{i^*}= \max\limits _{1 \leq j\leq q} f_j$. 

Before starting to prove, we need the following claims. 
\vskip 2mm
\noindent \textbf{Claim 2.} (\cite{lidicky2013})  For any $j<i^*$, $f_j < f_{i^*}$, i.e., $e(H(n,j))<e(H(n,i^*))$.

\vskip 2mm
\noindent \textbf{Claim 3.} (\cite{lidicky2013}) Let $F^*=S_{a_{i^*}} \cup S_{a_{i^*+1}} \cup \cdots \cup S_{a_{q}} $. Then $ex(n, F^*)= ex \left(n, S_{a_{i^*}}\right).$

\vskip 2mm
\noindent \textbf{Claim 4.} When $\beta_F\leq \max\limits _{1 \leq j\leq q} \{j-1+\frac{a_j-1}{2}\}$, $p\geq 1$ and $q\geq 1$, we have
\begin{equation*}\label{above}
	\binom{q}{2}+q(n-q)+ex\left(n-q, \mathop{\cup}\limits _{i=1}^{p} P_{\ell _i}\right) < ex \left(n,\mathop{\cup}\limits _{j=1}^{q} S_{a _j}\right).
\end{equation*}
\begin{proof}
	When $q=1$, since $p\geq 1$, by (\ref{case1}) and (\ref{case11}), we have $n-1+ ex(n-1, \mathop{\cup}\limits _{i=1}^{p} P_{\ell _i}) \leq \beta_F \cdot n -1<\lfloor \frac{a_1-1}{2} n \rfloor$.	

When $q>1$, by Theorem \ref{Sncup}, we have
\begin{equation}\label{star}
	ex \left(n,\mathop{\cup}\limits _{j=1}^{q} S_{a _j}\right)=e(H(n,i^*))=\left\lfloor f_{i^*}\cdot  n -\frac{(i^*-1)(i^*+a_{i^*}-1)}{2}  \right\rfloor.
\end{equation}	
If $\beta_F <  f_{i^*}$, then since $n$ is sufficiently large, the results follows. So we assume $\beta_F = f_{i^*}$. By (\ref{case1}), (\ref{case11}) and (\ref{star}), we only need to compare the terms that do not contain $n$.  
	
	If $p=1$, then $a_{i^*}=\ell_1+1+2q-2i^*$, and thus
	\begin{align*}
    	  \frac{(i^*-1)(i^*+a_{i^*}-1)+1}{2}& = \frac{(i^*-1)(\ell_1+2q-i^*)+1}{2}\\
    	  &\leq \frac{(q-1)(\ell_1+q)+1}{2}\\
    	  &< \frac{q^2+(\ell_1-1)(q+r)-r^2}{2}.
	\end{align*}
	
 Similarly, if $p\geq 2,  \ell_1=\cdots=\ell_{p}=3$, then $a_{i^*}=2(p+q+1-i^*)$, and thus
	\begin{align*}
		\frac{(i^*-1)(i^*+a_{i^*}-1)+1}{2}& = \frac{(i^*-1)(2p+2q+1-i^*)+1}{2}\\
		&\leq \frac{(q-1)(2p+q+1)+1}{2}\\
		&< \frac{(p+q)^2-1}{2}.
	\end{align*}

If $p\geq 2, \exists \ \ell_i\neq 3$, then $a_{i^*}=2q+2\delta_F +1-2i^*$, and thus
	\begin{align*}
		\frac{(i^*-1)(i^*+a_{i^*}-1)+1}{2}& = \frac{(i^*-1)(2q+2\delta_F -i^*)+1}{2}\\
		&\leq \frac{(q-1)(q+2\delta_F)+1}{2}\\
		&< \frac{1}{2} (q+\delta_F) (q+\delta_F -1)-1.
	\end{align*}
Therefore, Claim 4 holds by the above arguments. 
\end{proof}

Now, we begin to prove the result of this part.

Let $G$ be an extremal graph for $F$. Noting that any $\cup _{j=1}^{q} S_{a _j}$-free graph is also  $\mathop{\cup}\limits _{i=1}^{p} P_{\ell _i} \mathop{\cup}\limits _{j=1}^{q} S_{a _j}$-free, we have $e(G)\geq ex(n, \cup _{j=1}^{q} S_{a _j})$. We will show $e(G)\leq ex(n, \cup _{j=1}^{q} S_{a _j})$ by induction on $q$.

Assume that $q=1$. If $G$ contains $S_{a_1}$ as a subgraph, then $G-S_{a_1}$ is $\cup _{i=1}^{p} P_{\ell _i}$-free and $ex\left(n, \mathop{\cup}\limits _{i=1}^{p} P_{\ell _i}\right) \leq (\beta_F-1)n$ by (\ref{case1}). Since $H(n,1)$ is $\cup _{i=1}^{p} P_{\ell _i} \cup S_{a_1}$-free, we have $e(G)\geq e(H(n,1))=ex(n, S_{a_1})$. Let $m_0$ be the number of edges incident with the vertices of $S_{a_{1}}$ in $G$. Then by $\beta_F\leq f_1$ and (\ref{star}), we have
\begin{equation*}\label{eqs3}
 m_0  =e(G)-e(G-S_{a_1})\geq e(H(n,1))- ex\left(n-a_1-1, \mathop{\cup}\limits _{i=1}^{p} P_{\ell _i}\right) \geq n-1. 
\end{equation*}
Since $n$ is sufficiently large, the $S_{a_1}$ has a vertex $v$ such that $d(v)\geq \sum_{i=1}^{p} \ell_i+a_1$. It implies that $G-v$ is $\cup _{i=1}^{p} P_{\ell _i}$-free and so $e(G-v)\leq 
ex(n-1, \cup _{i=1}^{p} P_{\ell _i})$. Because $e(G)=d(v)+e(G-v)$ and $d(v)\leq n-1$, we have $e(G)<  ex(n, S_{a_1})$ by Claim 4,  which contradicts $e(G)\geq ex(n, S_{a_1})$. Therefore, $G$ is $S_{a_1}$-free and $e(G)\leq ex(n, S_{a_1})$, with equality if and only if $G=\EX (n, S_{a_1} )$.

 Suppose that $q> 1$ and Theorem \ref{main} holds for any positive integer smaller than $q$. We complete the proof by considering the following two cases separately.

\vskip 2mm 
 \noindent\textbf{Case 1.} $1<i^*\leq q$.
\vskip 2mm

By Claim 2, $ex(n,\cup _{j=1}^{i^*-1} S_{a _j})= \max\limits_{1\leq j \leq i^*-1} e(H(n,j)) < e(H(n,i^*))=ex(n,\cup _{j=1}^{q} S_{a _j})\leq e(G)$, which implies $G$ contains $\cup _{j=1}^{i^*-1} S_{a _j}$ as a subgraph.

For any $i<i^*$, let $j'$ be the index maximizing the number of edges for $\cup _{j=1}^{q} S_{a _j}-S_{a_i}$, where the index of $S_{a_j}$ is $j-1$ for $i+1\leq j\leq q$.  Clearly, the index of $S_{a_{i^*}}$  is $i^*-1$ in $\cup _{j=1}^{q} S_{a _j}-S_{a_i}$. Let $F'=F-S_{a_i}$.
Since $\beta_{F'}=\beta_F-1 \leq f_{i^*}-1=(i^*-1)-1+\frac{a_{i^*}-1}{2}$, by induction hypothesis,  $ex(n, F')= ex(n,\cup _{j=1}^{q} S_{a _j}-S_{a_i})$. 

Let $m_0$ be the number of edges incident with the vertices of $S_{a_{i}}$ in $G$. 
If $j'<i$, then $f_{j'}<f_{i^*}$ by Claim 2,  and hence we have
\begin{align*}
	m_0 & =e(G)-e(G-S_{a_i})\geq e(H(n,i^*))- ex(n, F')\geq  f_{i^*} \cdot n -f_{j'}\cdot  n +O(1)=\Omega (n).
\end{align*}
If $j'\geq i$, then $j'=i^*-1$ and $(i^*-1)-1+\frac{a_{i^*}-1}{2}= f_{i^*}-1$, and so  $e(G-S_{a_i}) \leq ex(n, F')\leq (f_{i^*}-1 ) \cdot n$. By a similar calculation as above, we get that $m_0 = \Omega (n) $.

Because $m_0 = \Omega (n) $ and $n$ is sufficiently large, there is a vertex $v$ in $S_{a_i}$ such that $d(v)\geq \sum_{i=1}^{p} \ell_i+\sum_{j=1}^{q} a_j +q-1$  for any $i<i^*$. Therefore, there is a set $U$ of order $i^*-1$ in $G$ such that each vertex in $U$ has degree of at least $\sum_{i=1}^{p} \ell_i+\sum_{j=1}^{q} a_j +q-1$. 

Let $\widetilde{F}=F-\cup_{j=1}^{i^*-1} S_{a_j} $. Now we show that $G-U$ is $\widetilde{F}$-free. If $G-U$ contains $\widetilde{F}$ as a subgraph, we set $W=V(G-U)\setminus V(\widetilde{F} )$. Then $|W| \geq \sum_{j=1}^{i^*-1} a_j$ because $n$ is sufficiently large. For any $u\in U$, we have 
$$d_{G[W]}(u) \geq  \sum\limits_{i=1}^{p} \ell_i+ \sum\limits_{j=1}^{q} a_j +q-1 -(q-1) -\sum_{i=1}^{p} \ell_i -\sum_{j=i^*}^q a_j = \sum_{j=1}^{i^*-1} a_j.$$
This means that $G-\widetilde{F}$ contains $\cup_{j=1}^{i^*-1} S_{a _j}$ as a subgraph with $i^*-1$ center vertices in $U$ and $\sum_{j=1}^{i^*-1} a_j$ pendant vertices in $W$, and hence we have $F \subseteq G$,  a contradiction. Therefore, $G-U$ is $\widetilde{F}$-free.

Since $\beta_{\widetilde{F}}=\beta_F-i^*+1 \leq f_{i^*}-i^*+1$, by induction hypothesis, $ex(n, \widetilde{F})=ex(n, F^*)$. By Claim 3, we have 
\begin{align*}
	e(G)&\leq e(G[U])+e(U,V(G)\setminus U)+e(G-U)\\
	&\leq \binom{i^*-1}{2}+(i^*-1)(n-i^*+1)+ex(n-i^*+1,S_{a_{i^*}})=ex\left(n, \mathop{\cup}\limits _{j=1}^{q} S_{a _j}\right).
\end{align*}
 Equality holds if and only if $G=\EX (n, \cup _{j=1}^{q} S_{a _j} )$.

\vskip 2mm
 \noindent\textbf{Case 2.} $i^*=1$.
 \vskip 2mm

 Then $e(G)\geq e(H(n,1))=ex(n, S_{a_1})=ex(n, \cup _{j=1}^{q} S_{a _j})$. Assume that $G$ contains an $S_{a_1}$. Let $F''=F-S_{a_1}$. Then $G-S_{a_1}$ is $F'' $-free. Let $j''$ be the index maximizing the number of edges for $\cup _{j=2}^{q} S_{a _j}$, where the index of $S_{a_j}$ is $j-1$ for $2\leq j\leq q$. Clearly, 
 $j''-1+\frac{a_{j''+1}-1}{2}=f_{j''+1}-1\leq f_1-1 = \frac{a_1-1}{2}-1$.

If $\beta_{F''}\leq f_{j''+1}-1$, then by induction hypothesis, $ex(n,F'')= ex(n, \cup _{j=2}^{q} S_{a _j})$, and thus
\begin{equation}\label{a1}
	\begin{split}
		n-1+ ex(n-1, F'')& \leq n-1+ ex(n, F'')\leq n-1+ (f_{j''+1}-1) n \\
		& \leq  \frac{a_1-1}{2}n-1 < ex(n, S_{a_1}).
	\end{split}
\end{equation}

By applying the similar discussion as in the proof of Part I, we can show that (\ref{q>1}) holds for any $F$ satisfying $\beta_F> \max\limits _{1 \leq j\leq q} \big\{j-1+\frac{a_j-1}{2}\big\}$. If $\beta_{F''}=\beta_F-1 > f_{j''+1}-1$, then by (\ref{q>1}) and (\ref{case1}), we have $ex(n,F'')\leq  (\beta_F-1 ) \cdot n$. Moreover, by (\ref{q>1}) and Claim 4, we have
\begin{equation}\label{a2}
	\begin{split}
		&n-1+ ex(n-1, F'')\\
		 \leq &\binom{q}{2}+q(n-q)+ex\left(n-q, \mathop{\cup}\limits _{i=1}^{p} P_{\ell _i}\right)< ex \left(n,\mathop{\cup}\limits _{j=1}^{q} S_{a _j}\right).
	\end{split}
\end{equation}

Let $m_0$ be the number of edges incident with the vertices of $S_{a_{1}}$ in $G$. Then by (\ref{star}), we have
\begin{align*}
	m_0 & =e(G)-e(G-S_{a_1})\\
	&\geq e(H(n,1))- ex\left(n-a_1-1, F''\right) \\
	&\geq f_1 \cdot n -\max\{f_1-1, \beta_F-1  \}\cdot n -1 \geq n-1,
\end{align*}
thus there is a vertex $v$ with degree  of at least $\sum_{i=1}^{p} \ell_i+\sum_{j=1}^{q} a_j +q-1$ in $S_{a_1}$. It implies that $G-v$ is $F''$-free. Hence, $e(G)=d(v)+e(G-v)\leq n-1+ ex(n-1, F'') < ex(n, S_{a_1})$ by (\ref{a1}) and (\ref{a2}), which contradicts $e(G)\geq ex(n, S_{a_1})$. Therefore, $G$ is $S_{a_1}$-free and $e(G)\leq ex(n, S_{a_1})$, with equality if and only if $G=\EX (n, S_{a_1} )$. \hfill $\square $

\section{The Tur\'an number of $\mathop{\cup}\limits _{i=1}^{p} P_{\ell _i} \cup q S_a$}\label{remark}

\hspace{1.5em}In this section, we further improve the lower bound of $n$ obtained in Theorem \ref{main} for $F$ in which all stars  have the same order. Let $F=\cup _{i=1}^{p} P_{\ell _i} \cup q S_a$ and set
 $$N_1=(a^2+a+1)q+ \begin{cases}
	(\frac{\ell_1}{2}+1)a+\ell_1, & \text{if } p=1,\\
	(2a+3) p+\frac{a+1}{2}, & \text{if } p\geq 2, \ell_i= 3 \text{ for } i\in [p],\\
	 \max \left\{a(s-\delta_F+1)+s, L  \right\}, & \text{if } p\geq 2, \ell_i\neq 3 \text{ for } i\in [p],
\end{cases}$$
 $$N_2=(a^2+a+1)q+ \begin{cases}
	(a+1)s, & \text{if } p=1, \text{ or } p\geq 2, \ell_i= 3 \text{ for } i\in [p],\\
    \max \left\{(a+1)s, L  \right\}, &  \text{if } p\geq 2, \ell_i\neq 3 \text{ for } i\in [p],
\end{cases}$$
where $s=\sum_{i=1}^{p} \ell_i$ and $L=\begin{cases} 
	\frac{\delta_F}{2}( 1+\frac{\ell_1-2}{2\delta_F-\ell_1}) +1, & \text{if all } \ell_i \text{ is even,}\\[2pt]
	\frac{5(s-1)(s-2)^2}{4(\delta_F-1)}+\delta_F-1, &  \text{otherwise.}
\end{cases}$
\vskip 3mm
The main result of this section is the following.
\begin{theo}\label{cupPl}
	Let $F=\cup _{i=1}^{p} P_{\ell _i} \cup q S_a$ with $a\geq 3$,  and $\ell_i\neq 3$ for each $i\in [p]$ or $\ell_1=\cdots=\ell_ p=3$ when $p\geq 2$. Then 
   $$ex(n, F) =\binom{q}{2}+q(n-q)+ex(n-q, \cup _{i=1}^{p} P_{\ell _i})$$
  if $n\geq N_1 $, $a\leq 2 \delta_F-2\mu_F+2$ and $a\geq \ell_1-1$ in the case when $p=1$, and $ex(n, F) =ex(n, qS_a)$ if $n\geq N_2 $ and $a\geq 2 \delta_F-2\mu_F+3$. 
  
  Moreover, in the former case, $\EX(n,F)=K_{q}  \vee \EX(n-q, \cup _{i=1}^{p} P_{\ell _i})$ if $p\geq 2$ and
  $$\E(n,F)=\begin{cases}
  	\big\{G_2\big(n,q,\ell_1\big) \big\}\cup \mathcal{G},	& \text{ if } \ell_1 \text{ is even and } r \in \big\{\frac{\ell_1}{2},\frac{\ell_1-2}{2} \big\}, \\[2pt]
  	\big\{G_2\big(n,q,\ell_1\big)\big \}, & \text{ otherwise, }  
  \end{cases}$$
  when $p=1$ and  $a\geq \ell_1-1$, where $\,n=q+d(\ell_1 -1)+r$, $0\leq r < \ell_1-1$, 
  $$\mathcal{G}= \begin{cases}
  	\left\{ G(d-1),  G(0) \right\}, & \text{if } a=\ell_1=4, r=1, \\
  	\left\{G(d-1) \right\}, & \text{otherwise},
  \end{cases}$$
  and $G(s)= K_{q} \vee \left( (d-s-1) K_{\ell_1-1}\cup 
  \left(K_{\frac{\ell_1-2}{2}} \vee
  \overline{K}_{\frac{\ell_1}{2}+s(\ell_1-1)+r}
  \right) \right)$; in the latter case, $\EX(n,F)=\EX(n,qS_a)$.
\end{theo}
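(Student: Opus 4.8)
The plan is to run the very same two-part induction on $q$ used in Section~\ref{PS} for Theorem~\ref{main}, but to replace each appeal to ``$n$ sufficiently large'' by an explicit numerical threshold and each asymptotic Tur\'an input by its sharp finite-$n$ counterpart: in place of Theorem~\ref{Sncup} I use the sharp range of Theorem~\ref{lanS} for $qS_a$, in place of Theorem~\ref{pp} I use Theorem~\ref{kPnZ} when every $\ell_i\neq 3$, Theorem~\ref{p3} when $\ell_1=\cdots=\ell_p=3$, and Theorem~\ref{P2n} to control $ex(n-q,\cup_{i=1}^{p}P_{\ell_i})$ in the all-even subcase. A preliminary observation makes the case split numerical: since all $a_j=a$ one has $\max_{1\le j\le q}\{j-1+\frac{a_j-1}{2}\}=(q-1)+\frac{a-1}{2}$, so $\beta_F>(q-1)+\frac{a-1}{2}$ is exactly $a\le 2\delta_F-2\mu_F+2$ and $\beta_F\le(q-1)+\frac{a-1}{2}$ is exactly $a\ge 2\delta_F-2\mu_F+3$. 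Hence the former/latter dichotomy of Theorem~\ref{cupPl} coincides with Part~I/Part~II of Section~\ref{PS}, and the lower bounds and $F$-free constructions carry over verbatim.

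For the former case ($n\ge N_1$) I would re-examine, one at a time, every point in Part~I where largeness of $n$ was invoked and record the exact inequality needed. There are four: (i) the strict inequality (\ref{exSn}) $\beta_F n-\gamma_F>ex(n,qS_a)$ forcing $qS_a\subseteq G$, which holds once $n$ exceeds twice the gap between the constant terms, since the $n$-slopes differ by at least $\tfrac12$; (ii) the degree bound (\ref{m0}) producing a high-degree vertex in each removed star, which requires $n$ to exceed the case-dependent path term of $N_1$ (namely $(\frac{\ell_1}{2}+1)a+\ell_1$ for $p=1$, $(2a+3)p+\frac{a+1}{2}$ for $p\ge2$ with $\ell_i=3$, and $a(s-\delta_F+1)+s$ for $p\ge2$ with $\ell_i\neq3$); (iii) the embedding $|W|=n-q-s\ge qa$ guaranteeing $G[\overline U]$ is $\cup_{i=1}^{p}P_{\ell_i}$-free, which is dominated by (ii); and (iv) the requirement that the background quantity $ex(n-q,\cup_{i=1}^{p}P_{\ell_i})$ already attain its claimed $(\delta_F-1)$-slope value. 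The parameter $L$ is precisely (iv): it is the Zhu--Chen range of Theorem~\ref{kPnZ} when some $\ell_i$ is odd, and, when all $\ell_i$ are even, the crossover of $n-q$ at which the $(\delta_F-1)(\,\cdot\,)$ term of Theorem~\ref{P2n} overtakes $[n,s_1,\ell_1]$ (note the denominator $2\delta_F-\ell_1$ in $L$ is exactly the slope difference $\delta_F-1-\frac{\ell_1-2}{2}$). Taking $N_1=(a^2+a+1)q+\max\{\text{path term},L\}$ split into the three structural cases makes (i)--(iv) simultaneously valid, and the extremal-graph analysis (distinguishing $G_2(n,q,\ell_1)$ from the graphs $G(s)$, and $G(s)$ being $F$-free iff $(s+1)(\ell_1-1)+r\le a$) is then word-for-word as in Section~\ref{PS}.

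The latter case ($n\ge N_2$) is handled identically by retracing Part~II: Claims~1--3 and the subcases $1<i^*\le q$ and $i^*=1$ go through once the comparisons of Claim~3 and the degree-extraction bounds hold, which again reduce to $n$ beating $(a^2+a+1)q$ plus an $(a+1)s$-type path term (and $L$ when $p\ge2$ with all $\ell_i\neq3$). One point to verify explicitly is that the induction on $q$ descends correctly with the explicit thresholds: removing one $S_a$ passes to $n-a-1$ vertices and $q-1$ stars, and since the path-dependent offsets in $N_1,N_2$ are independent of $q$ while $N_i(q)-N_i(q-1)=a^2+a+1\ge a+1$ (as $a\ge3$), one gets $n-a-1\ge N_i(q-1)$ and the induction hypothesis applies. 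This is the clean part of the argument.

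The main obstacle I anticipate is bookkeeping rather than conceptual: determining, for each of the three structural cases and for steps (i)--(iv), the sharpest threshold, and then checking that the single formula $\max\{\text{path term},L\}$ (together with the $(a^2+a+1)q$ star term) dominates all of them simultaneously and uniformly in $q$. The delicate comparisons are the constant-term inequalities already appearing in Claim~3, now required as genuine numerical bounds rather than asymptotics, and the all-even subcase, where one must confirm that $n-q\ge L$ genuinely places $n-q$ past the Theorem~\ref{P2n} crossover so that the $G_1$-type value—and not some $[n,s_j,\ell_j]$—is the correct background extremal number.
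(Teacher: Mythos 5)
Your proposal takes essentially the same route as the paper's own (sketched) proof: rerun the two-part induction of Theorem \ref{main} for the uniform-star case, replacing each ``sufficiently large $n$'' by an explicit threshold and each asymptotic input by its sharp finite-$n$ counterpart --- Theorem \ref{lanS} for $qS_a$, Theorem \ref{kPnZ} when all $\ell_i\neq 3$, Theorem \ref{p3} when all $\ell_i=3$, and Theorem \ref{P2n} for the all-even crossover that the paper packages as Lemma \ref{UP2l} and that accounts for the term $L$ --- and then check that $N_1$ and $N_2$ dominate all required inequalities, including the descent of the induction on $q$. Your identification of the dichotomy $a\le 2\delta_F-2\mu_F+2$ versus $a\ge 2\delta_F-2\mu_F+3$ with Parts I/II, and of the roles of the star term $(a^2+a+1)q$, the path terms, and $L$, matches the paper's argument.
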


As a special case, take  $\ell_i= \ell$ for $i\in [p]$ and $a=\ell-1$ in Theorem \ref{cupPl}, we can obtain Theorems \ref{1,k} and  \ref{k1k2}.
\vskip 3mm

Since the proof of Theorem \ref{cupPl} is almost the same as that of Theorem \ref{main} except some additional computation, we only give a sketch of the proof. The following lemma is used to ensure that the extremal graph for $F=\cup _{i=1}^p P_{\ell _i}$ with even integers $\ell_i$ is unique when $n$ is larger than some constant. 

\begin{lem}\label{UP2l}
	Let $F=\cup _{i=1}^p P_{\ell _i}$, $p\geq 2$, $\ell_1\geq \ell_2 \geq \cdots \geq \ell_p$ be positive even integers, and $n> \max \left\{3\delta_F-1,\frac{\delta_F}{2}\left( 1+\frac{\ell_1-2}{2\delta_F-\ell_1}\right) \right\}  $. Then 
	$ex(n,F)=  \left(\delta_F-1\right)\left(n-\frac{\delta_F}{2}\right)$, and \EX$(n,F)=G_1(n, \delta_F-1)$.
\end{lem}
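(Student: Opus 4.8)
The plan is to derive the lemma from the exact formula of Theorem \ref{P2n} and then show that, under the stated lower bound on $n$, the graph $G_1(n,\delta_F-1)$ strictly beats every competitor; this pins down both the value $ex(n,F)$ and the uniqueness of the extremal graph. First I would observe that since $\ell_1,\dots,\ell_p$ are all even, at most one of them is odd, so Theorem \ref{P2n} applies and gives
$$ex(n,F)=\max\left\{[n,s_1,\ell_1],\ \dots,\ [n,s_p,\ell_p],\ (\delta_F-1)\Big(n-\tfrac{\delta_F}{2}\Big)\right\},$$
where $s_j=\sum_{i=1}^{j}\ell_i$, together with the explicit list of candidate extremal graphs ($\E(n,P_{\ell_1})$, the graphs $K_{s_j-1}\cup H_j$ for $2\le j\le p$, and $G_1(n,\delta_F-1)$). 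A direct count gives $e(G_1(n,\delta_F-1))=(\delta_F-1)(n-\delta_F/2)$, so it suffices to prove the strict inequality $(\delta_F-1)(n-\delta_F/2)>[n,s_j,\ell_j]$ for every $j\in[p]$: this selects the last term as the maximum and, by Theorem \ref{P2n}, discards all listed extremal graphs except $G_1(n,\delta_F-1)$.

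The key step is a clean bound on each competitor. Since $[n,s_j,\ell_j]=\binom{s_j-1}{2}+ex(n-s_j+1,P_{\ell_j})$ and Theorem \ref{p1} (Erd\H os--Gallai) yields $ex(m,P_{\ell})\le\frac{(\ell-2)m}{2}$, we get $[n,s_j,\ell_j]\le\binom{s_j-1}{2}+\frac{\ell_j-2}{2}(n-s_j+1)$. The target inequality then reduces to a linear inequality in $n$ whose slope gap is $\alpha_j:=\delta_F-\ell_j/2=\sum_{i\ne j}\ell_i/2>0$. After simplifying (using $s_j-\ell_j=s_{j-1}$, with $s_0=0$ and $2\delta_F=\sum_{i}\ell_i=:s$), the inequality $(\delta_F-1)(n-\delta_F/2)>[n,s_j,\ell_j]$ holds precisely when $n>\mathrm{threshold}_j:=\frac{\delta_F(\delta_F-1)+(s_j-1)s_{j-1}}{2\alpha_j}$.

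It then remains to verify that $\max\{3\delta_F-1,\ \frac{\delta_F}{2}(1+\frac{\ell_1-2}{2\delta_F-\ell_1})\}$ dominates every $\mathrm{threshold}_j$. For $j=1$ we have $s_0=0$ and $2\alpha_1=2\delta_F-\ell_1$, so $\mathrm{threshold}_1=\frac{\delta_F(\delta_F-1)}{2\delta_F-\ell_1}=\frac{\delta_F}{2}\big(1+\frac{\ell_1-2}{2\delta_F-\ell_1}\big)$, which is exactly the second term of the maximum (and $2\delta_F-\ell_1=\sum_{i\ge2}\ell_i\ge2>0$, so the denominator is positive). For $2\le j\le p$, I would use $2\alpha_j=s-\ell_j\ge s_{j-1}$, giving $\frac{(s_j-1)s_{j-1}}{2\alpha_j}\le s_j-1\le s-1$, together with $\ell_j\le s/2$ (valid because $\ell_1\ge\ell_j$ forces $s\ge\ell_1+\ell_j\ge2\ell_j$), giving $\frac{\delta_F(\delta_F-1)}{2\alpha_j}\le\frac{s-2}{2}$; adding these yields $\mathrm{threshold}_j\le\frac{3s}{2}-2<3\delta_F-1$. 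Since $n$ strictly exceeds the maximum, all strict inequalities hold, so $ex(n,F)=(\delta_F-1)(n-\delta_F/2)$ and $\EX(n,F)=G_1(n,\delta_F-1)$.

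I expect the only real obstacle to be bookkeeping: confirming that the two-part threshold cleanly covers the per-$j$ thresholds, with the $j=1$ comparison (smallest slope gap, zero intercept) producing the second term of the maximum and the estimates for $j\ge2$ staying below $3\delta_F-1$. Everything else is an immediate consequence of Theorem \ref{P2n} and the Erd\H os--Gallai upper bound from Theorem \ref{p1}.
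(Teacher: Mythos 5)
Your proposal is correct and follows essentially the same route as the paper: both apply Theorem \ref{P2n} and then show $(\delta_F-1)(n-\delta_F/2)>[n,s_j,\ell_j]$ for all $j\in[p]$ using the same linear-in-$n$ upper bound, since your estimate $\binom{s_j-1}{2}+\frac{\ell_j-2}{2}(n-s_j+1)$ equals the paper's $\frac{1}{2}\bigl(n(\ell_j-2)+(s_j-1)(s_j-\ell_j)\bigr)$ identically, with the $j=1$ comparison producing the threshold $\frac{\delta_F}{2}\bigl(1+\frac{\ell_1-2}{2\delta_F-\ell_1}\bigr)$ and the $j\geq 2$ cases falling below $3\delta_F-1$. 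Your version is in fact slightly sharper in bookkeeping (retaining $s_{j-1}$ rather than bounding $s_j$ by $2\delta_F$ at once), but the structure and conclusions are the same.
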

\begin{proof}
	We will show that 
	$\left[n,\sum_{i=1}^j \ell_i, \ell_j\right]<(\delta_F-1)(n-\frac{\delta_F}{2})$ for any $j\in[p]$, and then the result follows from Theorem \ref{P2n}. 
	
	Since $[n,m,\ell] \leq \binom{m-1}{2} +\frac{n-(m-1)}{\ell-1} \binom{\ell-1}{2} =\frac{1}{2} (n(\ell-2)+(m-1)(m-\ell))$, we have
	\begin{align*}
		\left[n,\sum\limits_{i=1}^j \ell_i, \ell_j\right] & \leq \frac{1}{2} \left(n(\ell_j-2)+\left(\sum\limits_{i=1}^j \ell_i-1\right)\left(\sum\limits_{i=1}^j \ell_i-\ell_j\right) \right)\\
		&\leq \begin{cases}
			\frac{1}{2} ( n(\ell_j-2)+(2\delta_F-1)(2\delta_F-\ell_j) ), & j\geq 2,\\
			\frac{1}{2} n(\ell_1-2), & j=1.
		\end{cases}
	\end{align*}

	When $j\geq 2$, by $n> 3\delta_F-1$, we have $\left(\delta_F-1\right)\left(n-\frac{\delta_F}{2}\right)> \left[n,\sum_{i=1}^j \ell_i, \ell_j\right]$.
	
	When $j=1$, by $n> \frac{\delta_F}{2}\left( 1+\frac{\ell_1-2}{2\delta_F-\ell_1}\right)$, we have $\left(\delta_F-1\right)\left(n-\frac{\delta_F}{2}\right)>\left [n,\ell_1, \ell_1\right]$.
	
	The proof is completed.
\end{proof}

\noindent{\it The sketch of the proof of Theorem \ref{cupPl}.}  To complete the proof, we need only  to show the statements hold for sufficiently large $n$ in Theorem \ref{main} still hold for $n\geq N_1$ in Part I and $n\geq N_2 $ in Part II.

Let $F'=\cup _{i=1}^{p} P_{\ell _i}$, where $\ell_i\neq 3$ for $i\in [p]$ or $\ell_1=\cdots=\ell_ p=3$ when $p\geq 2$.

In Part I ($3\leq a\leq 2 \delta_F-2\mu_F+2$ and $a\geq \ell_1-1$ at this time), we need to show that $n\geq N_1$ can ensure the following.

\noindent(1) Theorems \ref{p1}, \ref{pp} and \ref{p3} still hold for $F'$, i.e., the Tur\'an number and corresponding extremal graphs are unchanged;\\
(2) $e(G)> ex(n, q S_a)$ in (\ref{exSn}), and thus $G$ contains $qS_a$ as a subgraph;\\
(3) $m_0\geq (a+1)\left( \sum_{i=1}^{p} \ell_i+ qa+q -1\right)$ in  (\ref{m0}).

To ensure (1), it is sufficient to show Theorem \ref{pp} holds for $F'$ when $n\geq N_1$, which follows from Theorem \ref{kPnZ} and Lemma \ref{UP2l}. 
By calculation and Theorem \ref{lanS}, (2) holds, and (3) follows from an easy calculation.

In Part II ($a\geq 2 \delta_F-2\mu_F+3$ at this time),  we always have $i^*=q$ and so we only need to show that $n\geq N_2 $ can ensure the following.

\noindent (a) Theorems \ref{p1}, \ref{pp} and \ref{p3} still hold for $F'$, i.e., the Tur\'an number are unchanged;\\
(b) $m_0\geq (a+1)\big(\sum_{i=1}^{p} \ell_i+a\big)$ when $q=1$;\\
(c) $m_0\geq (a+1)\big(\sum_{i=1}^{p} \ell_i+qa +q-1\big)$ in Case 1.

Noting the range of $n$, we can check that (a) follows from Theorem \ref{kPnZ} and Lemma \ref{UP2l}, and (b) and (c) hold by an easy calculation.

Thus Theorem \ref{cupPl} holds.

\vskip 2.5em

\section*{Acknowledgements}
We are grateful to the anonymous referees for their very careful comments which help us improve the presentation of 
this paper. This research was supported by NSFC under grant numbers 12371347, 12271337 and 12471327.

\vskip 1.5em 

\noindent\textbf{Data availability statement} \  Data sharing not applicable to this article as no datasets were generated or analysed during the current study.

\section*{Declarations}

\textbf{Conflict of interest}\  The authors declare that they have no known competing financial interests or personal relationships that could have appeared to influence the work reported in this paper.

\end{document}